\DeclareFontFamily{U}{mathx}{\hyphenchar\font45}
\DeclareFontShape{U}{mathx}{m}{n}{
      <5> <6> <7> <8> <9> <10>
      <10.95> <12> <14.4> <17.28> <20.74> <24.88>
      mathx10
      }{}
\DeclareSymbolFont{mathx}{U}{mathx}{m}{n}
\DeclareMathAccent{\widecheck}{0}{mathx}{"71}
\DeclareMathAccent{\wideparen}{0}{mathx}{"75}
\newcommand{\dbar}{\ensuremath{\overline\partial}}
\newcommand{\dbarstar}{\ensuremath{\overline\partial^*}}
\newcommand{\C}{\ensuremath{\mathbb{C}}}
\newcommand{\R}{\ensuremath{\mathbb{R}}}
\def\eps{\varepsilon}
\def\omz{\Omega}
\def\ov{\overline}
\newcommand{\sumprime}{\if@display\sideset{}{'}\sum%
	\else\sum'\fi}
\newtheorem{thm}{Theorem}[section]
\newtheorem{prop}[thm]{Proposition}
\newtheorem{lem}[thm]{Lemma}
\theoremstyle{definition}
\newtheorem{remark}{Remark}
\numberwithin{equation}{section}
\providecommand\ufootnote[1]{{\let\thefootnote\relax\footnote[0]{#1}}}
\newcommand{\ec}{\mathcal E}
\newcommand{\N}{\mathbb{N}}
\newcommand{\ol}{\overline}
\DeclareMathOperator{\supp}{supp}
\DeclareMathOperator{\dist}{dist} 
 \DeclareMathOperator{\Dom}{Dom}
\DeclareMathOperator{\Span}{Span}  
\begin{document}

\title[Kohn-Nirenberg Elliptic Regularization]{Spectral Stability of the $\dbar-$Neumann Laplacian: \\ the Kohn-Nirenberg elliptic regularization }

\author{Siqi Fu, Chunhui Qiu, and Weixia Zhu}

\thanks
{The first author was supported in part by a grant from the National Science Foundation
	(DMS-1500952). The second and third authors were supported in part by a grant from the National Natural Science Foundation of China (Grant No.~11571288).}

\address{S. Fu, Department of Mathematical Sciences,
	Rutgers University, Camden, NJ 08102, USA} \email{sfu@rutgers.edu}

\address{W. Zhu, School of Mathematical Sciences,
   Xiamen University, Xiamen, Fujian 361005, CHN}\email{zhuvixia@stu.xmu.edu.cn}

\begin{abstract} In this paper we study spectral stability of the $\bar\partial$-Neumann Laplacian under the Kohn-Nirenberg elliptic regularization. We obtain quantitative estimates for stability of the spectrum of the $\bar\partial$-Neumann Laplacian when either the operator or the underlying domain is perturbed.	
\bigskip
	
\noindent{{\sc Mathematics Subject Classification} (2010): 32W05, 32G05, 35J25, 35P15.}
	
	\smallskip
	
\noindent{{\sc Keywords}: The $\bar\partial$-Neumann Laplacian, the Kohn-Nirenberg elliptic regularization, variational eigenvalue, pseudoconvex domain, finite type condition.}
\end{abstract}

\maketitle

%\tableofcontents

\section{Introduction}\label{sec:intro}

The $\dbar$-Neumann Laplacian $\Box_q$ on a bounded domain $\Omega$ in $\C^n$ 
is (a constant multiple of) the usual Laplacian acting diagonally on $(p,q)$-forms with the $\dbar$-Neumann boundary condition. It is the archetype of an elliptic operator with non-coercive boundary condition.   Subelliptic estimates for the $\dbar$-Neumann Laplacian on smoothly bounded strongly pseudoconvex domains in $\C^n$ was established by Kohn~\cite{Kohn63} (see \cite{DangeloKohn99} for an exposition on related subjects).  One difficulty in studying non-coercive boundary value problems is to show that {\it a priori} estimates of derivatives imply that these derivatives exist and the same estimates hold without prior regularity assumptions.  Elliptic regularization was introduced by Kohn and Nirenberg~\cite{KohnNirenberg65} to resolve this difficulty.  By adding a positive constant $t$ multiple of an elliptic operator to the $\dbar$-Neumann Laplacian, the $\dbar$-Neumann problem is converted into a coercive elliptic problem for which existence of the derivatives is well known. One then obtains  {\it bona fide} estimates from {\it a priori} ones by taking $t\to 0^+$, provided the desired estimates are uniform in $t$. 

Spectral stability for the classical Dirichlet and Neumann Laplacians on domains in $\R^n$ has been studied extensively in the literatures (see, e.g., \cite{Fuglede99, Davies00, BL08} and references therein). Less is known of spectral stability for the $\dbar$-Neumann Laplacian. In \cite{FuZhu19}, 
we studied spectral stability of the $\bar\partial$-Neumann Laplacian $\Box_q$ on a bounded domain $\Omega$ in $\C^n$ as the underlying domain is perturbed. We established upper semi-continuity properties for the variational eigenvalues of the $\bar\partial$-Neumann Laplacian on bounded pseudoconvex domains, lower semi-continuity properties on pseudoconvex domains that satisfy Catlin's property ($P$), and quantitative estimates on smooth bounded pseudoconvex domains of finite type in the sense of D'Angelo. In this paper, we consider the perturbation $\Box^t_q$ of the $\dbar$-Neumann Laplacian introduced by Kohn and Nirenberg \cite{KohnNirenberg65} in their elliptic regularization procedure. We study stability of the spectrum of $\Box^t_q$, first as $t\to 0^+$ and then as the underlying domain $\Omega$ is perturbed.

Unlike the classical Dirichlet or Neumann Laplacian, the spectrum of the $\dbar$-Neumann Laplacian need not be purely discrete (see \cite{FuStraube01} for an exposition on the subject). There are several ways to measure spectral stability under this circumstance. Here our focus is on stability of the variational eigenvalues defined by the min-max principle and convergence of the operators in resolvent sense (see Section~\ref{sec:prelim} below for the precise definitions). When the spectrum is purely discrete, the variational eigenvalues are indeed eigenvalues, arranged in increasing order and repeated according to multiplicity. Let $\lambda^q_k(\Omega)$ be the $k^{\rm th}$-variational eigenvalues of the $\dbar$-Neumann Laplacian $\Box_q$ on $(0, q)$-forms, $1\le q\le n-1$, on $\Omega$.  Let $\lambda^{t,q}_k(\Omega)$ be the $k^{\rm th}$-eigenvalue of $\Box_q^t$. Our first result concerns spectral stability of $\Box^t_q$ as $t\to 0^+$ (see Theorem~\ref{thm4} and Theorem~\ref{th:res} in Section~\ref{sec:er}):

 \begin{thm}\label{thm7} Let $\omz$ be a bounded domain in $\C^n$ with $C^2$ boundary. Let $1\le q\le n-1$ and $k\in\N$. Then	$\Box^t_q$ converges to $\Box_q$ in strong resolvent sense as $t\to 0^+$ and
 	\begin{equation}\label{tt}
 	\mathop{\lim}\limits_{t\to0^+}\lambda^{t,q}_k(\Omega)=\lambda^q_k(\Omega).
 	\end{equation}	
 	Furthermore, if $\omz$ is strongly pseudoconvex with smooth boundary, then $\Box^t_q$ converges to $\Box_q$ in norm resolvent sense and if $\omz$ is pseudoconvex of finite type in the sense of D'Angelo, then there exist positive constants $\alpha\in (0, \ 1/2]$ and $C$ independent of $t$ and $k$ such that
 	\begin{align}\label{t4}
 	|\lambda_k^{t,q}(\omz)-\lambda^q_k(\omz)|\le Ctk(\lambda^q_k(\omz))^{2([\frac{1}{2\alpha}]+1)},
 	\end{align}
 	where $[1/2\alpha]$ is the integer part of $1/2\alpha$.
  \end{thm}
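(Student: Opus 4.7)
\emph{Setup.} The plan is to realize $\Box^t_q$ as the self-adjoint operator associated with the augmented closed form $Q^t(u,v):=Q(u,v)+t\,D(u,v)$, where $Q(u,v)=(\dbar u,\dbar v)+(\dbarstar u,\dbarstar v)$ on $\dom(Q)=\dom(\dbar)\cap\dom(\dbarstar)$ and $D$ is the Dirichlet form built from Euclidean derivatives. Since $D\ge 0$, one has $Q^t\ge Q$ and $\dom(Q^t)\subset\dom(Q)$, so $\lambda^{t,q}_k(\omz)\ge\lambda^q_k(\omz)$ by min-max; the content of the theorem is the matching upper bound. A bonus is that $Q^t(u,u)$ dominates $t\|u\|_1^2$, so $\Box^t_q$ has compact resolvent for each $t>0$ and the $\lambda^{t,q}_k(\omz)$ are honest eigenvalues.

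\emph{Strong resolvent convergence and \eqref{tt}.} I would check that $Q^t\to Q$ in the Mosco sense. The $\liminf$ condition is immediate from $Q\le Q^t$ and closedness of $Q$. For the recovery sequence, given $u\in\dom(Q)$ I would exploit the graph-norm density of smooth forms in $\dom(\dbarstar)$ inside $\dom(Q)$, which is standard on bounded $C^2$ domains (extend across $\pa\omz$, mollify, correct the boundary condition). Picking a smooth $v_\eps$ with $\|v_\eps-u\|_Q<\eps$ and diagonalizing $\eps=\eps(t)\to 0$ slowly enough that $t\|v_{\eps(t)}\|_1^2\to 0$ produces the required recovery sequence, hence strong resolvent convergence by Kato's theorem; equation \eqref{tt} then follows by the min-max principle, applied to a near-optimal $k$-dimensional subspace for $\lambda^q_k(\omz)$ together with the recovery sequences on a chosen $L^2$-orthonormal basis.

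\emph{Norm resolvent and the quantitative estimate.} For strong pseudoconvexity, Kohn's $1/2$-subelliptic estimate $\|u\|^2_{1/2}\le C(Q(u,u)+\|u\|^2)$ transfers verbatim via $Q^t\ge Q$, so $(\Box^t_q+I)^{-1}\colon L^2\to H^{1/2}(\omz)$ is uniformly bounded in $t$; the compact embedding $H^{1/2}(\omz)\hookrightarrow L^2(\omz)$ upgrades the strong resolvent convergence from the previous step to norm resolvent convergence by a collective-compactness argument. For finite type, let $\{u_j\}_{j\le k}$ be $L^2$-orthonormal eigenforms of $\Box_q$ with eigenvalues $\lambda^q_j$. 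Iterating the standard higher-order subelliptic estimate $\|u\|_{s+2\alpha}^2\le C_s(\|\Box_q u\|_s^2+\|u\|_s^2)$ on each $u_j$ gives $\|u_j\|_{2m\alpha}^2\le C^m(\lambda^q_j+1)^{2m}$; choosing $m=[\frac{1}{2\alpha}]+1$ so that $2m\alpha>1$ yields $\|u_j\|_1^2\le C(\lambda^q_k(\omz))^{2([\frac{1}{2\alpha}]+1)}$ for every $j\le k$. Taking $V_k=\Span\{u_1,\dots,u_k\}$ in min-max, for a unit $u=\sum c_j u_j$ Cauchy-Schwarz gives $\|u\|_1^2\le k\sum\|u_j\|_1^2$, hence
\[
\lambda^{t,q}_k(\omz)\le Q(u,u)+t\,\|u\|_1^2\le\lambda^q_k(\omz)+Ctk\,(\lambda^q_k(\omz))^{2([\frac{1}{2\alpha}]+1)},
\]
which combined with $\lambda^{t,q}_k(\omz)\ge\lambda^q_k(\omz)$ produces \eqref{t4}.

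\emph{Main obstacle.} The non-coerciveness of the $\dbar$-Neumann boundary condition makes the iterated subelliptic estimate the delicate step: each iteration needs commuting $\Box_q$ with tangential and normal derivatives, controlling the commutators by lower-order Sobolev norms, and absorbing. Making sure this bookkeeping closes with precisely the exponent $2([\frac{1}{2\alpha}]+1)$ and constants uniform in $j,k,t$ is where the technical weight sits, and it is also the step that forces the smooth-boundary and finite-type hypotheses, under which global regularity of the eigenforms is available by Catlin's work.
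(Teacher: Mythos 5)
Your proposal is correct, and its two quantitative halves --- the limit \eqref{tt} and the finite-type bound \eqref{t4} --- follow essentially the paper's own argument: the lower bound $\lambda^{q}_k(\Omega)\le\lambda^{t,q}_k(\Omega)$ from form monotonicity, the upper bound from graph-norm density of $W^1_{(0,q)}(\Omega)\cap\Dom(\dbarstar_{q-1})$ in $\Dom(Q_q)$, and, in the finite-type case, the span of the first $k$ eigenforms as test space together with the iterated Catlin estimate $\|N_q u\|_{s+2\alpha}\le C_s\|u\|_s$ to control $\|\nabla u_j\|$ by $(\lambda^q_k(\Omega))^{[\frac{1}{2\alpha}]+1}\|u_j\|$. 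Where you genuinely diverge is in the resolvent statements. For strong resolvent convergence the paper estimates $\|R^t_q u-R_q u\|^2$ directly by testing the form identity against a smooth approximant of $R_q u$, whereas you invoke Mosco (equivalently, monotone decreasing form) convergence; both are standard, and your route is arguably cleaner, though you should record that the liminf inequality rests on weak lower semicontinuity of the closed form $Q_q$ and that the recovery sequence exists precisely because $\Dom(Q^t_q)$ is independent of $t$ and graph-norm dense in $\Dom(Q_q)$. For norm resolvent convergence on strongly pseudoconvex domains the paper again computes directly, getting the explicit rate $\|N^t_q-N_q\|\le Ct^{1/2}$ by applying Kohn's subelliptic estimate to both $N_q$ and $N^t_q$; your collective-compactness upgrade (uniform $L^2\to W^{1/2}$ bound on $(\Box^t_q+I)^{-1}$, Rellich, strong convergence, and self-adjointness) is valid but only qualitative. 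One cosmetic discrepancy: your iterate carries $(\lambda^q_j+1)^{2m}$ rather than $(\lambda^q_j)^{2m}$; since $\lambda^q_1(\Omega)\ge q/(eD^2)>0$ on pseudoconvex domains the extra $+1$ is absorbed into the constant, so \eqref{t4} is unaffected.
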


 Our next result is about spectral stability of the Kohn-Nirenberg elliptic regularization operator $\Box^t_q$ as the underlying domain $\Omega$ is perturbed. Perturbation of the domain is measured in the $C^2$-topology.   Our main result in this direction is the following quantitative estimate:

 \begin{thm}\label{thmt}
	Let $\omz$ and $\Omega_j$ be smooth bounded pseudoconvex domains in $\C^n$ with normalized defining functions $r$ and $r_j$ respectively. Assume that $C^\infty$-norms of $r_j$ are uniformly bounded on $\ov\Omega_j$. Let $\delta_j=\|r-r_j\|_{C^2}$ be the $C^2$-norm  over $\ov\Omega\cup\ov\Omega_j$. 
Let $1\le q\le n-1$, $0<t<1$ and $k\in\N$.  Then there exist positive constants $\delta$ and $C_k$ such that
	\begin{align}\label{t2}
	\left|\lambda^{t,q}_k(\omz_j)-\lambda^{t,q}_k(\omz) \right|\le \dfrac{C_k\delta_j}{t^{2^{n+3}-1}},
	\end{align}
provided $\delta_j<\delta$.
\end{thm}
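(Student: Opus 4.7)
The plan is to use the min-max characterization of the variational eigenvalues and transplant trial forms between $\Omega$ and $\Omega_j$ by a diffeomorphism built from the defining functions. Using the normalized $r$ and $r_j$, one constructs $F_j\colon\overline\Omega\to\overline{\Omega_j}$ satisfying $\|F_j-\mathrm{id}\|_{C^2}\le C\delta_j$ (for instance by the gradient-flow construction used in \cite{FuZhu19}). Let $T_j$ be the associated transfer on $(0,q)$-forms, combining pull-back of the coefficients with the Jacobian correction needed for $dV$ together with an orthogonal rotation of the normal/tangential frame required to map the boundary condition defining $\mathrm{Dom}(Q^t_\Omega)$ onto the one defining $\mathrm{Dom}(Q^t_{\Omega_j})$. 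Computing in local coordinates how $\dbar$, $\dbarstar$, and the elliptic-regularization term transform under $T_j$ yields the comparison estimates
$$\bigl|\|T_j u\|^2_{L^2(\Omega_j)}-\|u\|^2_{L^2(\Omega)}\bigr|\le C\delta_j\|u\|^2_{L^2(\Omega)},\qquad \bigl|Q^t_{\Omega_j}(T_j u,T_j u)-Q^t_{\Omega}(u,u)\bigr|\le C\delta_j\|u\|^2_{H^s(\Omega)},$$
for a fixed $s$ depending on $n$ (the exact power of the Sobolev norm is dictated by the commutators of $\dbar,\dbarstar$ with the frame rotation, together with the need to handle lower-order coefficients in $L^\infty$).

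Having fixed this comparison, the min-max principle applied to $V_k:=\mathrm{span}\{\text{first }k\text{ eigenforms of }\Box^t_q\text{ on }\Omega\}$ yields the one-sided bound
$$\lambda^{t,q}_k(\Omega_j)\le \lambda^{t,q}_k(\Omega)+C\delta_j\,\sup_{u\in V_k\setminus\{0\}}\frac{\|u\|^2_{H^s(\Omega)}}{\|u\|^2_{L^2(\Omega)}},$$
and the reverse inequality follows by symmetry (constructing a diffeomorphism in the other direction from $r_j,r$, which is possible under the assumed uniform $C^\infty$-bounds on $r_j$). The theorem therefore reduces to an explicit $H^s$-regularity bound of the form $\|u\|_{H^s}^2\le C_k t^{-(2^{n+3}-1)}\|u\|_{L^2}^2$ on the spectral subspace of $\Box^t_q$ below $\lambda^{t,q}_k(\Omega)$.

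This regularity step is the main obstacle. The starting point is the coercivity $Q^t(u,u)\ge t\|u\|^2_{H^1(\Omega)}$ built into the Kohn--Nirenberg regularization, which gives $\|u\|_{H^1}^2\le \lambda^{t,q}_k\|u\|_{L^2}^2/t$ on $V_k$. For each fixed $t>0$ the operator $\Box^t_q$ is uniformly elliptic with smooth boundary conditions, so one can iterate second-order elliptic regularity to pass from $H^\ell$ to $H^{\ell+2}$, but each such step costs a negative power of $t$ because the leading-order term of $\Box^t_q$ is only elliptic with parameter $t$. To separate top-order terms from lower-order ones in the boundary-regularity bootstrap one needs $L^\infty$ control of coefficients, which via Sobolev embedding in $\C^n$ forces $\ell$ past $n$; the interplay between the regularity gain and the Sobolev embedding compounds the power of $t^{-1}$ at each stage and is what produces the doubling exponent $2^{n+3}-1$. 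The delicate point is the uniform-in-$t$ boundary analysis of $\Box^t_q$: at fixed $t$ classical elliptic theory gives smoothness, but one must track constants very carefully to obtain the sharp power of $1/t$. Once this regularity tower is in hand, substitution into the min-max inequality above yields \eqref{t2}.
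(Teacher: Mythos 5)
Your overall architecture matches the paper's: build a transition operator between $\Dom(Q^t_\Omega)$ and $\Dom(Q^t_{\Omega_j})$, compare the quadratic forms on the span of the first $k$ eigenforms, invoke the min-max principle (Lemma~\ref{first}), and reduce everything to a quantitative, $t$-dependent regularity bound for eigenforms of $\Box^t_q$. However, the proposal defers precisely the part that constitutes most of the paper's work. The estimate you need --- in the paper's form, $\|u\|_{W^{s+2}}\le C t^{-3\cdot 2^{s-1}}\|\Box^t_q u\|_{W^s}$ (Lemma~\ref{ttt}) and its consequence $\|u\|_{C^1(\ov\Omega)}\le C t^{-(2^{n+3}-1)/2}(\lambda^t)^{[\frac{n+1}{2}]+1}\|u\|$ (Lemma~\ref{infty}) --- is asserted as "the main obstacle" and described only qualitatively. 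The actual mechanism is a tangential difference-quotient argument (using that $D^h_j$ preserves $\Dom(\dbarstar)$), recovery of the normal second derivative from $\Box^t u=-(\tfrac14+t)\Delta u$, and an induction in which the constant at stage $s+1$ is the square of the constant at stage $s$ times $t^{-1}$-factors; that induction, not the Sobolev embedding, is what produces the doubling exponent. Sobolev embedding only fixes how many stages ($2m\approx n+3$) are needed. Without this lemma your argument has no content at the exponent $2^{n+3}-1$.

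A second, smaller issue concerns the transition operator. The paper does not use a diffeomorphism: it splits $u$ in special boundary frames $\{\ov\omega^k_l\}$ adapted to $r$, extends the tangential coefficients by a Stein extension operator, and translates the normal coefficients (which vanish on $\partial\Omega$) inward so that they vanish near $\partial\Omega_j$; the collar $\Omega_j\setminus\Omega^-_{2\delta_j}$, of measure $O(\delta_j)$, is then estimated via the $C^0$ and $C^1$ bounds on eigenforms, and this is exactly where the power $t^{-(2^{n+3}-1)}$ enters. Your diffeomorphism-plus-frame-rotation $T_j$ is plausible, but you must verify that it actually lands in $W^1_{(0,q)}(\Omega_j)\cap\Dom(\dbarstar)$, and you should justify why the form comparison needs $\|u\|_{H^s}$ with $s$ large: a genuine pull-back has no collar region, and the error terms from the non-holomorphic Jacobian and the frame rotation are first order in $u$, so on its face your construction would only require $\|u\|_{H^1}^2\le \lambda^{t,q}_k\|u\|^2/t$ --- which would give a far better power of $t$ than claimed. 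Either your $H^s$ requirement is an artifact that you have not explained, or there is a hidden obstruction (most likely in making $T_j$ respect the boundary condition at $W^1$ regularity) that you have not identified. Also note that $\|F_j-\mathrm{id}\|_{C^2}\le C\delta_j$ does not follow from $\|r-r_j\|_{C^2}=\delta_j$ by a gradient-flow construction (one derivative is lost); fortunately only $C^1$ control of $F_j$ is needed for the form comparison. Finally, for the reverse inequality you correctly note that the uniform $C^\infty$ bounds on $r_j$ are what make the constants independent of $j$, in agreement with the paper.
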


This paper is organized as follows. In Section~\ref{sec:prelim}, we recall the spectral theoretic
setup of the $\dbar$-Neumann Laplacian $\Box_q$ and the Kohn-Nirenberg elliptic regularization $\Box^t_q$. In Section~\ref{sec:er}, we study spectral stability of $\Box^t_q$ as $t\to 0^+$ and prove Theorem~\ref{thm7}. In Section~\ref{sec:deform}, we study spectral stability of $\Box^t_q$ as the underlying domain is perturbed and prove Theorem~\ref{thmt}. Throughout this paper, we will use $C$ to denote constants which might not be the same in different appearances.

\section{Preliminary}\label{sec:prelim}
In this section, we review the setup for the $\bar\partial$-Neumann
Laplacian (cf.~\cite{FollandKohn72, ChenShaw99}) and the elliptic regularization of the $\bar\partial$-Neumann
Laplacian(\cite{KohnNirenberg65}, see also \cite{Taylor96, Straube10}).  We define them
through their associated quadratic form. 

Let $\omz$ be a bounded domain in $\C^n$ and let $L^2_{(0, q)}(\Omega)$ be the space of $(0, q)$-forms with $L^2$-coefficients on $\Omega$ with respect to the 
standard Euclidean metric.  Let $\dbar_q\colon L^2_{(0, q)}(\Omega)\to L^2_{(0, q+1)}(\Omega)$ be the maximally defined Cauchy-Riemann operator.  The domain $\Dom(\dbar_q)$ of $\dbar_q$ consists of forms $u\in L^2_{(0, q)}(\Omega)$ such that $\dbar_q u\in L^2_{(0, q)}(\Omega)$ in the sense of distribution. Let $\dbarstar_q\colon L^2_{(0, q+1)}(\Omega)\to L^2_{(0, q)}(\Omega)$ be the adjoint of $\dbar_q$. Its domain is then given by
\begin{equation}\label{eq:dbarstar-dom}
\Dom(\dbarstar_q)=\big\{u\in L^2_{(0, q+1)}(\Omega) \mid \exists C>0,
|\langle u, \dbar_q v\rangle|\le C\|v\|,\ \forall v\in\Dom(\dbar_q)\big\}.
\end{equation}
When $\Omega$ has $C^1$-smooth boundary and  
$
u=\mathop{\sumprime}\limits_{|J|=q} u_J \,d\bar z_J \in C^1_{(0, q)}(\overline{\Omega}),
$
then $u\in\Dom(\dbarstar_{q-1})$ if and only if 
\begin{align}
   (\dbar r)^*\lrcorner u=	\sumprime_{|K|=q-1}\left(\sum_{k=1}^n u_{kK}\frac{\partial r}{\partial z_k}\right) d\bar z_K=0
\end{align}
on $\partial\Omega$, where $r$ is a defining function of $\partial\Omega$ such that $|\nabla r|=1$ on $\partial\Omega$ and 
\[
(\dbar r)^*=\sum_{j=1}^n \frac{\partial r}{\partial z_j}\frac{\partial}{\partial \bar z_j}
\]
is the dual $(0, 1)$-vector field of $\dbar r$ and $\lrcorner$ denotes the contraction operator. 
We decompose $u=u^{\tau}+u^{\nu}$ into the tangential part and normal part where
\[
u^{\nu}=((\dbar r)^*\lrcorner u)\wedge\dbar r \quad\text{and}\quad u^{\tau}=u-u^{\nu}.
\]

For $1\le q\le n-1$, let
\[
Q_q(u, v)=\langle\dbar_q u, \dbar_q
v\rangle_\Omega+\langle\dbarstar_{q-1} u,
\dbarstar_{q-1} v\rangle_\Omega
\]
be the sesquilinear form on $L^2_{(0, q)}(\Omega)$ with domain
$\Dom(Q_{q})=\Dom(\dbar_q)\cap \Dom(\dbarstar_{q-1})$. The $\dbar$-Neumann Laplacian $\square_q$ is 
the unique nonnegative self-adjoint operator $\square_{q}$ such that $Q_q(u, v)=\langle\square_q^{1/2} u, \square_q^{1/2}v\rangle_\Omega$ with $\Dom(\square_q^{1/2})=\Dom(Q_q)$.   
Consequently, $\square_q$ is given by
$$
\square_q=\dbar_{q-1}\dbarstar_{q-1}+\dbarstar_q\dbar_q
$$
and
$$	
\Dom(\square_q)=\{u\in L^2_{(0, q)}(\Omega) \mid u\in\Dom(Q_q), \dbar_q u\in \Dom(\dbarstar_{q}), \dbar_{q-1}^* u\in\Dom(\dbar_{q-1})\}.
$$
When $\omz$ is pseudoconvex, then it follows from  H\"{o}rmander's $L^2$-estimates for the $\dbar$-equation  that $\square_q$ has a bounded inverse $N_q=\Box_{q}^{-1}\colon L^2_{(0, q)}(\Omega)\to L^2_{(0, q)}(\Omega)$, the $\dbar$-Green's operator (\cite{Hormander65}, see also \cite{ChenShaw99}).

We now review the elliptic regularization in the setting of the $\dbar$-Neumann problem (\cite{KohnNirenberg65}). Let $\omz$ be a bounded domain in $\C^n$. For $t>0$, let
\[
Q_q^t(u,v)=Q_q(u,v)+t\langle\nabla u, \nabla v\rangle_\omz
\]
with $\Dom(Q_q^t)=W^1_{(0,q)}(\omz)\cap\Dom(\dbarstar_{q-1})$,  where the gradient operator $\nabla$ acts component-wise. (Hereafter, we use $W^s_{(0,q)}(\omz)$ to denote the space of $(0, q)$-forms with coefficients in the $L^2$-Sobolev space of order $s$.  The associated norm is denoted by either $\|\cdot\|_{W^s}$ or $\|\cdot\|_{s}$.) Then $Q_q^t$ is a densely defined, closed sesquilinear form on $L^2_{(0, q)}(\Omega)$. Let $\Box_q^t$ be the self-adjoint operator associated with $Q_q^t$. This is an elliptic operator with coercive boundary condition. It was introduced by Kohn and Nirenberg to study non-coercive boundary problems such as the $\dbar$-Neumann problem.  For abbreviation, we will call the operator $\Box_q^t$ the Kohn-Nirenberg Laplacian.

When $\partial\Omega$ is $C^2$-smooth,  then a form $u\in C^2_{(0,q)}(\ol\omz)$ belongs to $\Dom(\square_q^t)$ if and only if $u\in\Dom(\dbarstar_{q-1})$ and 
$$
(\dbar r)^*\lrcorner\dbar_q u+t\left(\dfrac{\partial u}{\partial \nu}\right)^{\tau}=0
$$ 
on $\partial\omz$, where $r$ is a $C^2$-smooth defining function of $\omz$ and $\left(\dfrac{\partial u}{\partial \nu}\right)^{\tau}$ is the tangent part of the (component-wise) normal derivative $\dfrac{\partial u}{\partial \nu}$
of $u$ (see \cite[\S~3.3]{Straube10} and \cite[Ch.~12]{Taylor96}). 

We will use $\lambda^q_k(\Omega)$ and $\lambda_k^{t,q}(\Omega)$ to denote the $k^{th}$-variational eigenvalues of $\square_q$ and $\square_q^t$ on $\Omega$ respectively, which are defined by min-max principle as follows:
 
 \begin{equation}\label{eq:minmax}
 \lambda^q_k(\Omega)=\inf_{L\subset\Dom(Q_q)\atop \dim L=k}\sup\limits_{u\in L\setminus\{0\}}\,Q_q(u,u)/\|u\|^2
 \end{equation}
 and 
  \begin{equation}
 \lambda^{t,q}_k(\Omega)=\inf_{L\subset\Dom(Q^t_q)\atop \dim L=k}\sup\limits_{u\in L\setminus\{0\}}\,Q^t_q(u,u)/\|u\|^2,
 \end{equation}
where the infima take over all linear $k$-dimension subspaces of $\Dom(Q_q)$ and  $\Dom(Q^t_q)$ respectively. Recall that the spectrum of a non-negative self-adjoint operator $S$ is purely discrete if and only if the variational eigenvalues $\lambda_k(S)$ defined as above goes to $\infty$ as $k\to\infty$. In this case, $\lambda_k(S)$ is the $k^{\text{th}}$-eigenvalue of $S$ when the eigenvalues are arranged in increasing order and repeated according to multiplicity (see \cite[Chapter~4]{Davies95}).  We collect some elementary properties of the Kohn-Nirenberg Laplacian in the following proposition:

\begin{prop}\label{prop:kn}  Let $\Omega$ be a bounded domain in $\C^n$. Let $k$ be a positive integer and let $K=k\cdot n!/q!(n-q)!$. Then
\begin{equation}\label{eq:kn1a}
	\lambda_k^q(\Omega)\le \lambda^{t, q}_k(\Omega)
\end{equation}
and
	\begin{equation}\label{eq:kn1b}
	t\lambda_k^N(\Omega)\le\lambda^{t, q}_K(\Omega)\le \big(\frac{1}{4}+t \big)\lambda^D_k(\Omega),
	\end{equation}
where $\lambda^N_k(\Omega)$ and $\lambda^D_k(\Omega)$ are respectively the $k^{\text{th}}$ variational
eigenvalues of the Neumann and Dirichlet Laplacians.  Furthermore, if $\partial\Omega$ is $C^1$-smooth, then $\Box^t_q$ has purely discrete spectrum and its first eigenvalue satisfies
\begin{equation}\label{kn1c}
\lambda^{t, q}_1(\Omega)\ge C\min\{t, 1\}.
\end{equation}
As a consequence, $N^t_q=(\Box^t_q)^{-1}$ is compact and satisfies
\begin{equation}\label{eq:kn2}
\|N^t_q u\|\le \big(1/C\min\{t, 1\}\big) \|u\|, \quad u\in L^2_{(0, q)}(\Omega)
\end{equation}	
for some constant $C$ independent of $t$. 
%Furthermore, if $\partial\Omega$ is smooth, then
%\begin{equation}\label{eq:kn3}
%\|N_q^tu\|_{s+2}\le C_s t\|u\|_{s},\quad u\in W^s_{(0,q)}(\omz)
%\end{equation}
%for some constant $C_s$ dependent on $s$ but independent of $t$, where $\|\cdot\|_{s}$ denotes the  $L^2$-Sobolev norm of order $s$. 
\end{prop}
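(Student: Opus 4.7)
The plan is to establish the five assertions of Proposition~\ref{prop:kn} using the min-max principle (2.14) together with one non-trivial technical ingredient — a Poincar\'e-type inequality for forms in $\Dom(\dbarstar_{q-1})$ — which I expect to be the main obstacle. For the first inequality $\lambda^q_k(\omz)\le\lambda^{t,q}_k(\omz)$, I would simply observe that $\Dom(Q^t_q)=W^1_{(0,q)}(\omz)\cap\Dom(\dbarstar_{q-1})\subset\Dom(Q_q)$ and $Q^t_q\ge Q_q$ pointwise on $\Dom(Q^t_q)$; substituting into (2.14) and restricting the min-max to subspaces of $\Dom(Q^t_q)$ yields the bound.

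For the sandwich $t\lambda^N_k(\omz)\le\lambda^{t,q}_K(\omz)\le(1/4+t)\lambda^D_k(\omz)$ with $K=k\cdot n!/q!(n-q)!$, I first note that on $(0,q)$-forms the componentwise Neumann (resp.\ Dirichlet) Laplacian has each scalar eigenvalue with multiplicity $\binom{n}{q}=n!/q!(n-q)!$, so its $K$-th variational eigenvalue equals $\lambda^N_k(\omz)$ (resp.\ $\lambda^D_k(\omz)$). The lower bound then follows from $Q^t_q(u,u)\ge t\|\nabla u\|^2$ on $\Dom(Q^t_q)\subset W^1_{(0,q)}(\omz)$ via min-max. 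For the upper bound, the key ingredient is the Morrey-Kohn-H\"ormander identity for compactly supported $(0,q)$-forms in $\C^n$,
\[
\|\dbar v\|^2+\|\dbarstar v\|^2=\tfrac14\|\nabla v\|^2,
\]
which follows from integration by parts together with the elementary fact $\sum_j\|\partial\phi/\partial\bar z_j\|^2=\tfrac14\|\nabla\phi\|^2$ for compactly supported scalar $\phi$. Given any $k$-dimensional $L\subset C^\infty_c(\omz)$ nearly realizing $\sup_{\phi\in L}\|\nabla\phi\|^2/\|\phi\|^2\le\lambda^D_k(\omz)+\varepsilon$, the $K$-dimensional trial space $L'=\{\sum_{|J|=q}\phi_J\,d\bar z_J:\phi_J\in L\}\subset\Dom(Q^t_q)$ satisfies $Q^t_q(v,v)/\|v\|^2\le(1/4+t)(\lambda^D_k(\omz)+\varepsilon)$ (using $\sum_J\|\nabla\phi_J\|^2/\sum_J\|\phi_J\|^2\le\max_J\|\nabla\phi_J\|^2/\|\phi_J\|^2$), and letting $\varepsilon\to0$ delivers the bound.

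For discreteness under the $C^1$ assumption, Rellich-Kondrachov provides the compact embedding $W^1_{(0,q)}(\omz)\hookrightarrow L^2_{(0,q)}(\omz)$; since $Q^t_q$ controls the $W^1$-norm for fixed $t>0$, the form domain embeds compactly in $L^2$, forcing $\lambda^{t,q}_k\to\infty$. The quantitative bound on $\lambda^{t,q}_1$ is the main technical point; it reduces to proving the Poincar\'e-type inequality
\[
\|\nabla u\|^2\ge C_\omz\|u\|^2\quad\text{for all } u\in W^1_{(0,q)}(\omz)\cap\Dom(\dbarstar_{q-1}).
\]
I would establish this by contradiction: a sequence $u_n$ with $\|u_n\|=1$ and $\|\nabla u_n\|\to0$ is bounded in $W^1$, hence has an $L^2$-convergent subsequence with limit $u$ satisfying $\|u\|=1$; distributional convergence of gradients forces $\nabla u=0$, so $u$ is a constant form and the convergence is actually in $W^1$-norm. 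The trace theorem then passes the boundary condition $(\dbar r)^*\lrcorner u_n=0$ to the limit, yielding $(\dbar r)^*\lrcorner u\equiv0$ on $\partial\omz$. Since $\omz$ is bounded, the Gauss map of $\partial\omz$ is surjective onto $S^{2n-1}$, so the complex gradient vectors $(\partial r/\partial z_1,\ldots,\partial r/\partial z_n)|_\zeta$ span $\C^n$ as $\zeta\in\partial\omz$ varies, forcing the constant $u$ to vanish — a contradiction. Combining: $Q^t_q(u,u)\ge\min\{t,1\}\|\nabla u\|^2\ge C_\omz\min\{t,1\}\|u\|^2$, yielding $\lambda^{t,q}_1(\omz)\ge C_\omz\min\{t,1\}$. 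Compactness of $N^t_q=(\Box^t_q)^{-1}$ then follows from the discrete spectrum tending to infinity, and its operator norm is exactly $1/\lambda^{t,q}_1(\omz)\le 1/(C_\omz\min\{t,1\})$.
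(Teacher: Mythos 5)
Your proposal is correct and follows essentially the same route as the paper: the monotonicity of form domains and quadratic forms for \eqref{eq:kn1a}, the inclusions $W^1_{0,(0,q)}(\Omega)\subset\Dom(Q^t_q)\subset W^1_{(0,q)}(\Omega)$ together with the identity $\|\dbar v\|^2+\|\dbarstar v\|^2=\tfrac14\|\nabla v\|^2$ for compactly supported forms and the multiplicity count $K=k\binom{n}{q}$ for \eqref{eq:kn1b}, and Rellich compactness for discreteness. The only (minor) divergence is in \eqref{kn1c}: you prove a Poincar\'e inequality on $W^1_{(0,q)}(\Omega)\cap\Dom(\dbarstar_{q-1})$ by a compactness--contradiction argument, whereas the paper bounds $\lambda^{t,q}_1$ below by $\min\{t,1\}$ times the first eigenvalue of $\Box^1_q$ and shows the latter is positive because a zero eigenform would be a constant-coefficient form violating the boundary condition; both arguments hinge on the same final fact (the complex gradients $\partial r/\partial z_j$ at extremal boundary points rule out nonzero constant forms in $\Dom(\dbarstar_{q-1})$), so the two proofs are equivalent in substance.
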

\begin{proof} The inequality \eqref{eq:kn1a} is a consequence of the min-max principle in the definition of the variational eigenvalues and 
	the fact that
\[
\Dom(Q^t_q)\subset \Dom(Q_q)\quad\text{and}\quad  Q_q(u, u) \le Q^t_q(u, u), \quad u\in\Dom(Q^t_q).
\]
Since
\[
\Dom (Q^t_q)\subset W^1_{(0, q)}(\Omega)\quad\text{and}\quad 
t\|\nabla u\|^2\le Q^t_q (u, u), \quad u\in \Dom(Q^t_q),
\]
we have 
  \begin{equation}
  	\inf_{L\subset W^1_{(0, q)}(\Omega)\atop \dim L=K}\sup\limits_{u\in L\setminus\{0\}}\,\|\nabla u\|^2/\|u\|^2\le \lambda^{t, q}_{K}(\Omega).
  \end{equation}
The quantity on the left-hand side is the $K^{\text{th}}$-variational eigenvalues of the Neumann Laplacian acting componentwise on $(0, q)$-forms. We then obtain the first inequality in \eqref{eq:kn1b}.  Note that here we have used the fact that a $(0, q)$-form in $\C^n$ has  $n!/q!(n-q)!$ many components.
  
The second inequality in \eqref{eq:kn1b} follows similarly from the fact that
\[
W^1_{0, (0, q)}(\Omega)\subset \Dom (Q^t_q)
\]
and
\[
Q^t_q(u, u)=\big(\frac{1}{4}+ t\big)\|\nabla u\|^2, \quad u\in W^1_{0, (0, q)} (\Omega),
\] 
where $W^1_{0, (0, q)}(\Omega)$ is the completion of the space of smooth, compactly supported $(0, q)$-forms on $\Omega$ in $W^1_{(0, q)}(\Omega)$.

When $\Omega$ has $C^1$-smooth boundary, $W^1_{(0, q)}(\Omega)$ is relatively compact in $L^2_{(0, q)}(\Omega)$. It follows that
$
\{ u\in\Dom(Q^t_q) \mid \|u\|^2+Q^t_q(u, u)\le 1\}
$
is a relatively compact subset of $L^2_{(0, q)}(\Omega)$. Thus $\Box^t_q$ has compact resolvent and its spectrum is purely discrete. The smallest eigenvalue $\lambda_1^{t, q}(\Omega)$ of $\Box^t_q$ must be positive. Otherwise, if $\lambda_1^{t, q}(\Omega)=0$, then the corresponding eigenform $u$ satisfies $\|\nabla u\|=0$ and the $\dbar$-Neumann boundary condition $u\in\Dom(\dbarstar)$. Therefore $u$ has constant coefficients. Since $\partial\Omega$ is $C^1$-smooth, there are points on the boundary  where only one of the partial derivatives $\partial\rho/\partial z_j$, $1\le j\le n$, of a defining function $\rho$ of $\Omega$ is non-zero. (One can consider, for example, the points furthest from a coordinate hyperplane.)
By applying the $\dbar$-Neumannn boundary condition to $u$ on these points, we then conclude that the coefficients of $u$  must be all identically $0$, which leads to a contradiction. 

Since
\[
Q^t_q(u, u)\ge \min\{t, 1\} \big(Q_q(u, u)+\|\nabla u\|^2\big),
\]
we have
\[
\lambda^{t, q}_1 (\Omega)\ge C\min\{t, 1\}
\] 
where $C>0$ is the smallest eigenvalues of $\Box_q^{t_0}$ with $t_0=1$. Inequality~\eqref{eq:kn2} is then a consequence of the above inequality. %This is comes from the elliptic property of $\square_q^t$ (see \cite[Chapter~12]{Taylor96}).
\end{proof}

\begin{remark}\label{remark1} When $\Omega$ is pseudoconvex, it follows from H\"{o}rmander's $L^2$-estimates for the $\dbar$-operator that 
\begin{equation}\label{eq:h1}
Q^t_q(u, u)\ge Q_q(u, u)\ge \frac{q}{D^2 e} \|u\|^2, \quad u\in\Dom(Q^t_q) 
\end{equation}
and
\begin{equation}\label{eq:h2}
\|N^t_q u\|\le \frac{D^2 e}{q} \|u\|,
\end{equation}
where $D$ is the diameter of $\Omega$ (\cite{Hormander65}; see also \cite[Theorem~4.4.1]{ChenShaw99}).
\end{remark}

%Spectral stability of the complex Laplacian can be studied from different perspectives. Here our focus is on stability of the variational eigenvalues and the convergence in resolvent sense. 
%We will study spectral stability in the Kohn-Nirenberg Laplacian as $t\to 0^+$ and as the underlying domain $\Omega$ varies.  Perturbation of the domains is measured in part by the Hausdorff distance.  Recall that for two
%sets $A$ and $B$ in a metric space $(X, d)$, the Hausdorff distance between $A$ and $B$ is given by
%\[
%d_H(A,B)=\max\{\sup\limits_{x\in A}d(x,B),\sup\limits_{y\in B}d(y,A)\}.
%\]

Let $S_i$, $i=1, 2$, be non-negative self-adjoint operators on Hilbert spaces with associated quadratic forms $Q_i$. One way to estimate the difference of variational eigenvalues of $S_1$ and $S_2$ is to construct a transition operator $T\colon \Dom(Q_1)\to\Dom(Q_2)$ and estimate the differences between $\langle f, \ g\rangle_1$ and $\langle Tf, \ Tg\rangle_2$ and between $Q_1(f, g)$ and $Q_2(Tf, Tg)$ for any $f, g\in\Dom(Q_1)$. The following simple well-known lemma is useful (see, e.g., \cite[Lemma~2.1]{FuZhu19}). 

\begin{lem}\label{first} Let $k$ be a positive integer. Suppose 
 there exist $0<\alpha_k<1/(2k)$ and $\beta_k>0$ such that for any orthonormal set $\{u_1, u_2, \ldots, u_k\}\subset\Dom(Q_1)$,  
 \begin{equation}\label{eq:approx}
 |\langle Tu_h, Tu_l\rangle_{2}-\delta_{hl}|\le \alpha_k \quad\text{and}\quad |Q_{2}(T u_h, T u_l)-Q_{1}(u_h,u_l)|\le \beta_k.
 \end{equation}
 	Then 
  	\begin{align}\label{30}
 		\lambda_k(S_2)\le \lambda_k(S_1)+2 k(\alpha_k\lambda_k(S_1)+\beta_k).
 	\end{align}
 \end{lem}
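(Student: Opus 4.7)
The plan is to prove this via the min-max characterization by exhibiting an explicit $k$-dimensional trial subspace in $\Dom(Q_2)$, obtained by pushing forward nearly optimal test vectors for $S_1$ through $T$. The main points are (i) showing the image has full dimension $k$, which is where the hypothesis $\alpha_k<1/(2k)$ gets used, and (ii) controlling the Rayleigh quotient on the image via the two hypotheses in \eqref{eq:approx}.

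More precisely, I would proceed as follows. Fix $\varepsilon>0$. By the definition \eqref{eq:minmax} of $\lambda_k(S_1)$, choose a $k$-dimensional subspace $L_0\subset\Dom(Q_1)$ with $Q_1(u,u)\le(\lambda_k(S_1)+\varepsilon)\|u\|_1^2$ for all $u\in L_0$, and let $\{u_1,\ldots,u_k\}$ be an orthonormal basis of $L_0$. Apply the hypothesis \eqref{eq:approx} to this set. For any $v=\sum_{h=1}^k c_h Tu_h$ and $u=\sum_{h=1}^k c_h u_h$, one gets, using $\sum_{h,l}|c_h||c_l|\le k\|c\|^2$ (Cauchy--Schwarz),
\begin{equation*}
\bigl|\|v\|_2^2-\|c\|^2\bigr|\le k\alpha_k\|c\|^2,\qquad \bigl|Q_2(v,v)-Q_1(u,u)\bigr|\le k\beta_k\|c\|^2.
\end{equation*}
The first estimate combined with $k\alpha_k<1/2$ gives $\|v\|_2^2\ge(1-k\alpha_k)\|c\|^2>\tfrac12\|c\|^2$, which shows that $T$ is injective on $L_0$, so $L:=T(L_0)$ is $k$-dimensional in $\Dom(Q_2)$. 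The second estimate, together with $Q_1(u,u)\le(\lambda_k(S_1)+\varepsilon)\|c\|^2$, yields $Q_2(v,v)\le(\lambda_k(S_1)+\varepsilon+k\beta_k)\|c\|^2$.

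Applying the min-max principle for $S_2$ to the subspace $L$ then gives
\begin{equation*}
\lambda_k(S_2)\le\sup_{v\in L\setminus\{0\}}\frac{Q_2(v,v)}{\|v\|_2^2}\le\frac{\lambda_k(S_1)+\varepsilon+k\beta_k}{1-k\alpha_k}.
\end{equation*}
Rearranging, $(\lambda_k(S_2)-\lambda_k(S_1)-\varepsilon)(1-k\alpha_k)\le k\alpha_k(\lambda_k(S_1)+\varepsilon)+k\beta_k$, and using $1/(1-k\alpha_k)\le 2$ (valid since $k\alpha_k<1/2$) produces the bound
\begin{equation*}
\lambda_k(S_2)-\lambda_k(S_1)\le \varepsilon+2k\bigl(\alpha_k(\lambda_k(S_1)+\varepsilon)+\beta_k\bigr).
\end{equation*}
Letting $\varepsilon\to 0^+$ yields the desired inequality \eqref{30}.

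The argument is essentially linear algebra plus the variational definition, and I do not anticipate a real obstacle. The one point that warrants care is that $\lambda_k(S_1)$ need not be a genuine eigenvalue when the spectrum of $S_1$ has an essential part, so one cannot simply plug in eigenfunctions; this is handled by the $\varepsilon$-nearly-optimal subspace $L_0$ and passage to the limit at the end. The sharpness of the constant $2$ in \eqref{30} reflects precisely the worst-case loss from the denominator $1-k\alpha_k$.
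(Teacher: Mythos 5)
Your proof is correct and follows the same standard min-max/trial-subspace argument that the paper defers to \cite{FuZhu19}; indeed, the intermediate bounds you derive, $\|v\|_2^2\ge(1-k\alpha_k)\|c\|^2$ and $Q_2(v,v)\le Q_1(u,u)+k\beta_k\|c\|^2$, are precisely the conditions recorded in Remark~\ref{remark2}. The handling of the possibly non-discrete spectrum via an $\varepsilon$-nearly-optimal subspace and the final rearrangement using $1/(1-k\alpha_k)\le 2$ are both sound.
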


\begin{remark}\label{remark2} Condition \eqref{eq:approx} in Lemma~\ref{first} can be replaced by the following: For any $k$-dimensional subspace $L_k$ of $\Dom(Q_1)$ and  $u\in L_k$,
\begin{equation}\label{eq:approx2}
	\|Tu\|^2_2\ge (1-k\alpha_k) \|u\|_1^2 \quad {\text{and}} \quad
	Q_2(Tu,Tu)\le Q_1(u, u)+k\beta_k \|u\|_1^2.
\end{equation}
We refer the reader to \cite{FuZhu19} for a proof of Lemma~\ref{first}.
\end{remark}

Spectral stability can also be studied from the perspective of resolvent convergence. Let $T_j$ and $T$ be self-adjoint operators on Hilbert space ${\mathbb H}$.  Recall that
$T_j$ is said to {\it converge to $T$ in norm resolvent sense}  if for all $\lambda\in {\mathbb C}\setminus {\mathbb R}$, the resolvent operator $R_\lambda(T_j)=(T_j-\lambda I)^{-1}$ converges to $R_\lambda(T)= (T-\lambda I)^{-1}$ in norm and $T_j$ is said to {\it converge to $T$ in strong resolvent sense}  if $R_\lambda(T_j)$ converges strongly to $R_\lambda(T)$.   It is well known that  if $T_j$ converges to $T$ in norm resolvent sense, then for any $\lambda\not\in\sigma(T)$,  $\lambda\not\in\sigma(T_j)$ for sufficiently large $j$, and if $T_j$ converges to $T$ in strong resolvent sense, then for any $\lambda\in\sigma(T)$, there exist $\lambda_j\in\sigma(T_j)$ so that $\lambda_j\to\lambda$. We refer the reader to \cite[\S VIII.7]{ReedSimon80} for relevant material.

\section{Spectral stability under the elliptic regularization}\label{sec:er}

In this section, we study spectral stability of the Kohn-Nirenberg Laplacian $\Box^t_q$ as $t\to 0^+$. We obtain quantitative estimates for the difference between $\lambda^q_k(\Omega)$ and $\lambda^{t,q}_k(\Omega)$ when $\omz$ is a smooth bounded pseudoconvex domain of finite type in the sense of D'Angelo. We also study the convergence of $\square_q^t$ in resolvent sense as $t\to 0^+$.  

A notion of finite type was introduced by D'Angelo \cite{Dangelo82} in connection with subelliptic theory of the $\dbar$-Neumann Laplacian. Roughly speaking, the $D_q$-type of a smooth bounded domain $\Omega$ is the maximal order of contact of $\partial\Omega$ with any $q$-dimensional complex analytic variety. (We refer the readers to  \cite{Dangelo82, Dangelo93, DangeloKohn99} for the precise definition.) Catlin \cite{Catlin83, Catlin87} showed that a smooth bounded pseudoconvex domain $\Omega$  is of finite $D_q$-type if and only if there exist constants $0<\alpha\le 1/2$ and $C>0$ such that the following subelliptic estimate holds:
\begin{equation}\label{subelliptic}
\|u\|^2_{\alpha}\le C Q_q(u, u), \quad u\in\Dom(Q_q).
\end{equation}
The constant $\alpha$ is usually referred to as the order of subellipticity for the $\dbar$-Neumann Laplacian and it is equal to $1/2$ when $\omz$ is strongly pseudoconvex. The following lemma is a direct consequence of Catlin's theorem. 

\begin{lem}\label{lem15} 
Let $\omz$ be a smooth bounded pseudoconvex domain of finite $D_q$-type in $\C^n$. Let $u$ be an eigenform of the $\dbar$-Neumann Laplacian $\Box_{q}$ with associated eigenvalue $\lambda(\Omega)$. Let $m$ and $l$ be non-negative integers.  Then there exist positive constants $\alpha\in (0, \ 1/2]$,  $B_m$ and $C_l$ such that 
	\begin{align}
	\|u\|_{W^{2m\alpha}}&\le B_m(\lambda(\omz))^{m}\|u\| \label{28a}
	\intertext{and}
	\|u\|_{C^l(\overline{\Omega})}&\le C_l (\lambda(\omz))^{[\frac{n+l}{2\alpha}]+1} \|u\|. \label{28}
	\end{align}
	$($Hereafter we use $[a]$ to denote the integer part of a real number $a$.$)$
\end{lem}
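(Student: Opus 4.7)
The plan is to derive Lemma~\ref{lem15} from Catlin's subelliptic estimate~\eqref{subelliptic} by a Sobolev bootstrap argument, followed by the Sobolev embedding theorem. The key ingredient is the higher-order Sobolev version of \eqref{subelliptic}, namely the estimate
\[
\|N_q f\|_{s+2\alpha}\le C_s\|f\|_s,\qquad s\ge 0,
\]
for the $\dbar$-Neumann operator $N_q=\Box_q^{-1}$. This is the standard Sobolev form of Kohn's subelliptic estimate on smooth bounded pseudoconvex domains of finite $D_q$-type (see, e.g., \cite[\S 3]{Straube10}), and I will take it as known.

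First, I would rewrite the eigenform identity $\Box_q u=\lambda(\omz)u$ as $u=\lambda(\omz)N_q u$, and apply the above inequality iteratively with $s$ taking the successive values $0,2\alpha,4\alpha,\ldots,2(m-1)\alpha$. This gives
\[
\|u\|_{2m\alpha}=\lambda(\omz)\,\|N_q u\|_{2m\alpha}\le C_{2(m-1)\alpha}\lambda(\omz)\,\|u\|_{2(m-1)\alpha}\le\cdots\le B_m(\lambda(\omz))^m\|u\|,
\]
with $B_m=\prod_{j=0}^{m-1}C_{2j\alpha}$, establishing~\eqref{28a}.

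For~\eqref{28}, I would invoke the Sobolev embedding theorem: since $\omz\subset\C^n$ has real dimension $2n$, one has $W^s(\omz)\hookrightarrow C^l(\ov\omz)$ whenever $s>l+n$. Choosing the smallest integer $m$ with $2m\alpha>n+l$, namely $m=[(n+l)/(2\alpha)]+1$, and combining this embedding with~\eqref{28a} yields~\eqref{28}. The only substantive ingredient is the higher Sobolev version of the subelliptic estimate that powers the iteration; although \eqref{subelliptic} as stated is only the $L^2$ case, the $2\alpha$-gain is well known to propagate to all Sobolev orders via the standard Kohn-Nirenberg tangential smoothing and commutator arguments, so the main obstacle here is really bookkeeping rather than a new idea.
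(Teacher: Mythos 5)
Your proposal is correct and follows the same route as the paper: iterate Catlin's Sobolev estimate $\|N_q f\|_{s+2\alpha}\le C_s\|f\|_s$ on the identity $u=\lambda(\Omega)N_q u$ to get \eqref{28a}, then apply the Sobolev embedding theorem with $m=[\frac{n+l}{2\alpha}]+1$ to get \eqref{28}. Your bookkeeping of the constants and the embedding threshold $2m\alpha>n+l$ is accurate.
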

\begin{proof}  From above-mentioned work of Catlin, we know that there exist constants $\alpha\in (0, \ 1/2]$ and $C_s>0$ such that
\begin{equation}\label{cat-est}
\|N_q u\|_{s+2\alpha}\le C_s\|u\|_{s}.
\end{equation}
Starting with $s=0$ and repeatedly applying \eqref{cat-est} to $\Box_\Omega u=\lambda(\Omega) u$, we obtain \eqref{28a}. The estimate \eqref{28} is then an immediate consequence of the Sobolev embedding theorem.
\end{proof}

Recall that $C^1_{(0, q)}(\ol{\Omega})\cap\Dom(\dbarstar_{q-1})$ is dense in $\Dom(Q_q)$ in the graph norm 
\[
\|u\|_Q=(\|u\|^2_\Omega +Q_q(u, u))^{1/2}
\]
when $\partial\omz$ is $C^2$-smooth (see, e.g., \cite[Lemma~4.3.2]{ChenShaw99}). Thus $\Dom(Q^t_q)$ is also dense in $\Dom(Q_q)$ in the graph norm. We will use this fact in proving the following Theorem:

 \begin{thm}\label{thm4} Let $\omz$ be a bounded domain in $\C^n$ with $C^2$-smooth boundary. Let $1\le q\le n-1$ and $k\in\N$. Then	
    \begin{equation}\label{tto}
     \mathop{\lim}\limits_{t\to0^+}\lambda^{t,q}_k(\Omega)=\lambda^q_k(\Omega).
    \end{equation}	
 Furthermore, if $\omz$ is a smooth bounded pseudoconvex domain of finite $D_q$-type, then there exist positive constants $\alpha\in (0, \ 1/2]$ and $C$ independent of $t$ or $k$ such that
     \begin{align}\label{t1}
 	 |\lambda_k^{t,q}(\omz)-\lambda^q_k(\omz)|\le C tk (\lambda^q_k(\omz))^{2([\frac{1}{2\alpha}]+1)}.
 	 \end{align}
  \end{thm}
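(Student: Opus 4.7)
The plan is to prove the two statements via the min-max characterization \eqref{eq:minmax} together with Proposition~\ref{prop:kn}, which already supplies the lower bound $\lambda_k^q(\Omega) \le \lambda_k^{t,q}(\Omega)$ via \eqref{eq:kn1a}; so in both parts only a matching upper bound is needed. The two proofs have a common structure: produce an explicit $k$-dimensional subspace of $\Dom(Q_q^t)$ whose Rayleigh quotients for $Q_q^t$ are close to those of $Q_q$, then apply the min-max principle.

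For the qualitative limit \eqref{tto}, I would fix $\varepsilon>0$ and choose a $k$-dimensional subspace $L\subset\Dom(Q_q)$ with $\sup_{u\in L\setminus\{0\}}Q_q(u,u)/\|u\|^2\le\lambda_k^q(\Omega)+\varepsilon$. Since $\partial\Omega$ is $C^2$, $C^1_{(0,q)}(\ol\Omega)\cap\Dom(\dbarstar_{q-1})$ is dense in $\Dom(Q_q)$ in the graph norm of $Q_q$ (see, e.g., \cite[Lemma~4.3.2]{ChenShaw99}) and is contained in $W^1_{(0,q)}(\Omega)\cap\Dom(\dbarstar_{q-1})=\Dom(Q_q^t)$, so $L$ can be perturbed to a nearby $k$-dimensional subspace $L'\subset\Dom(Q_q^t)$, independent of $t$, with $\sup_{u\in L'\setminus\{0\}}Q_q(u,u)/\|u\|^2\le\lambda_k^q(\Omega)+2\varepsilon$. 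Because $L'$ is finite-dimensional and lies in $W^1_{(0,q)}(\Omega)$, there is a constant $C(L')$ with $\|\nabla u\|^2\le C(L')\|u\|^2$ for $u\in L'$, so $Q_q^t(u,u)\le(\lambda_k^q(\Omega)+2\varepsilon+tC(L'))\|u\|^2$ on $L'$. Min-max then gives $\lambda_k^{t,q}(\Omega)\le\lambda_k^q(\Omega)+2\varepsilon+tC(L')$, and letting $t\to0^+$ and then $\varepsilon\to0$ yields \eqref{tto}.

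For the quantitative estimate \eqref{t1}, on a smooth bounded pseudoconvex domain of finite $D_q$-type the subelliptic estimate \eqref{subelliptic} forces $\Box_q$ to have compact resolvent and hence purely discrete spectrum, and Lemma~\ref{lem15} places its eigenforms in $C^\infty(\ol\Omega)\cap\Dom(\dbarstar_{q-1})\subset\Dom(Q_q^t)$ with explicit $W^{2m\alpha}$-bounds. I would take an $L^2$-orthonormal system $u_1,\ldots,u_k$ of eigenforms of $\Box_q$ with eigenvalues $\lambda_1^q(\Omega)\le\cdots\le\lambda_k^q(\Omega)$, set $L_k=\Span\{u_1,\ldots,u_k\}$, and choose $m=[1/(2\alpha)]+1$ so that $2m\alpha\ge 1$. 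Then \eqref{28a} gives $\|\nabla u_j\|\le\|u_j\|_{W^1}\le B_m(\lambda_j^q(\Omega))^m$. For $u=\sum_{j=1}^k c_ju_j\in L_k$ one has $\|u\|^2=\sum|c_j|^2$, $Q_q(u,u)=\sum|c_j|^2\lambda_j^q(\Omega)\le\lambda_k^q(\Omega)\|u\|^2$, and by Cauchy--Schwarz
$$\|\nabla u\|^2\le\|u\|^2\sum_{j=1}^k\|\nabla u_j\|^2\le kB_m^2(\lambda_k^q(\Omega))^{2m}\|u\|^2,$$
so $Q_q^t(u,u)\le(\lambda_k^q(\Omega)+Ctk(\lambda_k^q(\Omega))^{2m})\|u\|^2$ on $L_k$. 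The min-max principle together with \eqref{eq:kn1a} then delivers \eqref{t1}. The main technical ingredient is the eigenform regularity and the explicit control of $\|\nabla u_j\|$ by a power of $\lambda_j^q(\Omega)$; once Lemma~\ref{lem15} provides these, the remainder is a clean min-max computation.
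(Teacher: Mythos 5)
Your proposal is correct and follows essentially the same route as the paper: the lower bound from \eqref{eq:kn1a}, the density of $W^1_{(0,q)}(\Omega)\cap\Dom(\dbarstar_{q-1})$ in $\Dom(Q_q)$ in the graph norm for the qualitative limit, and the span of the first $k$ eigenforms together with Lemma~\ref{lem15} (with $m=[1/(2\alpha)]+1$) and Cauchy--Schwarz for the quantitative bound. The only difference is expository: you spell out the finite-dimensional perturbation and the gradient estimate $\|\nabla u\|^2\le kB_m^2(\lambda_k^q(\Omega))^{2m}\|u\|^2$ in more detail than the paper does.
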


\begin{proof} On the one hand, from Proposition~\ref{prop:kn} we know that $\lambda_k^q (\Omega)\le\lambda_k^{t, q}(\Omega)$. On the other hand,  since $W^1_{(0,q)}(\omz)\cap\Dom(\dbarstar_{q-1})$ is dense in $\Dom(Q_q)$ in the graph norm $\|\cdot\|_Q$,    for any $\varepsilon>0$, there exists $k$-dimensional subspace $L_k\subset W^1_{(0,q)}(\omz)\cap\Dom(\dbarstar_{q-1})$ such that
	\begin{equation}\label{xx}
	\begin{aligned}
	\lambda^q_k(\omz)+\varepsilon&\ge\lambda_Q(L_k)=\sup\limits_{u\in L_k\setminus\{0\}}\dfrac{Q^t(u,u)-t\|\nabla u\|^2}{\|u\|^2}\\
	&\ge \sup\limits_{u\in L_k\setminus\{0\}}\dfrac{Q^t(u,u)}{\|u\|^2}-t\sup\limits_{u\in L_k\setminus\{0\}}\dfrac{\|\nabla u\|^2}{\|u\|^2}\\
	&\ge\lambda_k^{t,q}(\omz) -t\sup\limits_{u\in L_k\setminus\{0\}}\dfrac{\|\nabla u\|^2}{\|u\|^2}.
	\end{aligned}
	\end{equation}
	Letting $t\to 0^+$, we then have $\limsup\limits_{t\to0}\lambda_k^{t,q}(\omz)\le \lambda_k^{q}(\omz)$ and hence (\ref{tto}).
	
	Under the pseudoconvexity and finite type assumptions, the spectrum of $\Box_q$ is purely discrete. 
	Let $u_l$ be eigenforms associated with eigenvalues $\lambda^q_l(\omz)$, $1\le l\le k$.
	Let $L_k=\Span\{u_1, u_2, \cdots, u_k\}$ and let $u\in L_k$. It follows from Lemma \ref{lem15} and the Cauchy-Schwarz inequality that there exist constants $\alpha\in (0, \ 1/2]$ and $C>0$ such that
 	\begin{align*}
 	\|\nabla u\|^2\le Ck(\lambda^q_k(\omz))^{2([\frac{1}{2\alpha}]+1)}\|u\|^2.
 	\end{align*}
 	Thus
 	$$0\le\lambda_k^{t,q}(\omz)-\lambda^q_k(\omz)\le t\sup\limits_{u\in L_k\setminus\{0\}}\,\|\nabla u\|^2/\|u\|^2\le C tk (\lambda^q_k(\omz))^{2([\frac{1}{2\alpha}]+1)}.$$
This concludes the proof of Theorem~\ref{thm4}. \end{proof}	
 
% Let $\Omega$ be bounded pseudoconvex domains in $\C^n$. Then the spectra of $\Box_{q}$ and $\Box^t_{q}$ are uniformly bounded away from $0$. It is easy to see that $\Box^t_{q}$ converge to $\Box_{q}$ in strong resolvent sense if $N^t_q$ converges to $N_q$ strongly on $L^2_{(0, q)}(\omz)$ (see, e.g., \cite[Theorem VIII.9]{ReedSimon80}). 

We now study the resolvent convergence of the Kohn-Nirenberg Laplacian $\Box^t_q$ as $t\to 0^+$. Our result is as follows.

 \begin{thm}\label{th:res}  Let $\omz$ be a bounded domain in $\C^n$ with $C^2$-smooth boundary. Then $\square^{t}_q$ converges to $\square_q$ in  strong resolvent sense. If $\Omega$ is strongly pseudoconvex with smooth boundary, then $\square^t_q$ converges to $\square_q$ in norm resolvent sense. 
\end{thm}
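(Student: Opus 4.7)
The plan is to analyze the resolvents $u^t:=(\Box^t_q + I)^{-1} f$ and $u:=(\Box_q + I)^{-1} f$ for arbitrary $f \in L^2_{(0,q)}(\Omega)$ via their variational characterizations, exploiting the monotone form inequality $Q_q \le Q^t_q$ that decreases to $Q_q$ pointwise as $t\to 0^+$. The strong resolvent assertion will follow from weak compactness plus a form-inequality upgrade, while the norm assertion in the strongly pseudoconvex case will follow from Kohn's $W^1$-regularity and a one-line energy identity.

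For strong resolvent convergence on a $C^2$-smooth bounded domain, I would first test the resolvent identity $Q^t_q(u^t,v)+\langle u^t,v\rangle=\langle f,v\rangle$ (for $v\in\Dom(Q^t_q)$) against $v=u^t$ to obtain the uniform a priori bounds $\|u^t\|\le\|f\|$, $Q_q(u^t,u^t)\le\|f\|^2$, and $t\|\nabla u^t\|^2\le\|f\|^2$. For any sequence $t_j\to 0^+$, weak compactness in $\Dom(Q_q)$ equipped with the graph norm yields a subsequence along which $u^{t_j}\rightharpoonup u_*$. For any test form $v\in W^1_{(0,q)}(\Omega)\cap\Dom(\dbarstar_{q-1})$, the cross term satisfies $t_j|\langle\nabla u^{t_j},\nabla v\rangle|\le \sqrt{t_j}\,\|f\|\,\|\nabla v\|\to 0$, so passage to the limit gives $Q_q(u_*,v)+\langle u_*,v\rangle=\langle f,v\rangle$; since $W^1_{(0,q)}(\Omega)\cap\Dom(\dbarstar_{q-1})$ is a form core for $Q_q$ (as recalled just before Theorem~\ref{thm4}), this forces $u_*=u$. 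To upgrade weak convergence to strong, I would combine the form inequality with the variational identity to obtain
\[
Q_q(u^{t_j},u^{t_j})+\|u^{t_j}\|^2\le Q^{t_j}_q(u^{t_j},u^{t_j})+\|u^{t_j}\|^2=\langle f,u^{t_j}\rangle\longrightarrow Q_q(u,u)+\|u\|^2,
\]
and invoke weak lower semicontinuity of $Q_q(\cdot,\cdot)+\|\cdot\|^2$ to conclude $\|u^{t_j}\|\to\|u\|$; combined with weak convergence, this gives strong $L^2$-convergence, and uniqueness of the limit $u$ yields strong resolvent convergence along the full family.

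For norm resolvent convergence on a smoothly bounded strongly pseudoconvex domain, the extra ingredient is Kohn's $W^1$-regularity theorem, which gives that $(\Box_q+I)^{-1}$ maps $L^2_{(0,q)}(\Omega)$ continuously into $W^1_{(0,q)}(\Omega)$. Thus $u\in W^1_{(0,q)}(\Omega)\cap\Dom(\dbarstar_{q-1})=\Dom(Q^t_q)$ with $\|\nabla u\|\le C\|f\|$. Setting $w^t:=u^t-u\in\Dom(Q^t_q)$ and subtracting the two variational equations yields
\[
Q_q(w^t,w^t)+t\|\nabla w^t\|^2+\|w^t\|^2=-t\langle\nabla u,\nabla w^t\rangle,
\]
to which Cauchy--Schwarz and absorption of the $\nabla w^t$ term give $\|w^t\|^2\le (t/2)\|\nabla u\|^2\le Ct\|f\|^2$, hence $\|(\Box^t_q+I)^{-1}-(\Box_q+I)^{-1}\|_{L^2\to L^2}=O(\sqrt{t})\to 0$. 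The trickiest step is the weak-to-strong upgrade in the $C^2$ case: one has no uniform $W^1$-bound on $u^t$, so no Rellich-type compact embedding is directly available; the argument instead leverages the monotone form inequality together with weak lower semicontinuity. Once Kohn's regularity places $u$ in the smaller form domain $\Dom(Q^t_q)$, the quantitative estimate in the strongly pseudoconvex case reduces to the short energy identity above.
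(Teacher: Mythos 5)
Your argument is correct, and for the strong resolvent part it takes a genuinely different route from the paper. The paper proves strong convergence by a direct chain of inequalities: it bounds $\|R^t_qu-R_qu\|^2$ by $\widetilde Q_q(R^t_qu-R_qu,R^t_qu-R_qu)$, inserts an $\eps$-approximation $v\in W^1_{(0,q)}(\Omega)\cap\Dom(\dbarstar_{q-1})$ of $R_qu$ in the graph norm, and arrives at $\|R^t_qu-R_qu\|^2\le\|u\|(t^{1/2}\|\nabla v\|+2\eps)$, sending $t\to0^+$ before $\eps\to0^+$. You instead run a Mosco-type compactness argument: uniform form bounds, weak convergence in the form domain, identification of the limit via the form core, and the upgrade to strong $L^2$-convergence from $\limsup\widetilde Q_q(u^{t_j},u^{t_j})\le\widetilde Q_q(u,u)$ together with weak lower semicontinuity. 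Both arguments rest on the same density fact recalled before Theorem~\ref{thm4}, and neither yields a rate; yours is softer but equally rigorous, while the paper's is more elementary (no weak compactness needed). For the norm resolvent part the two proofs are essentially the same energy identity, but with a small and worthwhile difference: the paper works with $N^t_q$ and $N_q$ (using the H\"ormander spectral gap to reduce to the Green operators) and needs Kohn's subelliptic estimate \emph{uniformly in $t$} to control $\|\nabla N^t_qu\|$, whereas you absorb the $t\|\nabla w^t\|^2$ term into the left-hand side and therefore only need the $W^1$-bound on the unregularized resolvent. Both yield the same $O(\sqrt t)$ operator-norm rate. One point you leave implicit (as does the paper) is that convergence of the resolvents at the single point $\lambda=-1$ propagates to all non-real $\lambda$ via the resolvent identity; this is standard and not a gap.
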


\begin{proof} Let $\widetilde{Q}_q(u, v)=Q_q(u, v)+\langle u, v\rangle$ and let 
	$$
	F_q=\Box_q+I \quad \text{and}\quad R_q=(\Box_q+I)^{-1}. 
	$$
	Similarly, let  $\widetilde{Q}^t_q(u, v)=Q^t_q(u, v)+\langle u, v\rangle$ and let 
	$$
	F^t_q=\Box^t_q+I \quad \text{and}\quad R^t_q=(\Box^t_q+I)^{-1}.
	$$ 
For any $u\in L^2_{(0, q)}(\Omega)$, $R_q(u)\in\Dom(\Box_q)\subset\Dom(Q_q)$. Since $C^1_{(0, q)}(\overline{\Omega})\cap\Dom(\dbarstar_{q-1})$ is dense in $\Dom(Q_q)$ in the graph norm $\|\cdot\|_Q$,
for any $\epsilon>0$, there there exists $v\in W^1_{(0,q)}(\omz)\cap\Dom(\dbarstar_{q-1})$ such that 
$$
\|v- R_qu\|_{Q}<\eps.
$$
Note that $R^t_q(u)\in \Dom(\Box^t_q)\subset \Dom(Q^t_q)\subset\Dom(Q_q)$. We have
	 \begin{align*}
	 \|R_q^t u-R_qu\|^2&\le \widetilde{Q}_q(R_q^t u-R_qu, R_q^t u-R_qu)\\
	 &=\widetilde{Q}_q(R_q^t u, R_q^t u-R_qu)-\widetilde{Q}_q(R_q u, R_q^t u-R_qu)\\
	 &=\widetilde{Q}_q(R_q^t u, R_q^t u-v)+\widetilde{Q}_q(R_q^t u, v-R_qu)-\langle u, R_q^t u-R_qu\rangle\\
	 &=\langle u, R_qu-v\rangle-t\langle\nabla R_q^t u, \nabla(R_q^t u-v)\rangle+\widetilde{Q}_q(R_q^t u,v-R_qu)\\
	 &\le\langle u, R_qu-v\rangle+t\langle\nabla R_q^t u, \nabla v\rangle+\widetilde{Q}_q(R_q^t u,v-R_qu)\\
	 &\le \|u\|\|R_q u-v\|+t\|\nabla R_q^t u\|\|\nabla v\|+\|R^t_qu\|_{Q}\|v- R_qu\|_{Q}.
	 \end{align*}
	 Note that
	 \begin{align*}
	 t\|\nabla (R_q^t u)\|^2\le \widetilde{Q}_q^t(R_q^t u, R_q^t u)=\langle u, R_q^t u\rangle\le \|u\|\|R_q^t u\|\le \|u\|^2
	 \end{align*}
	 and 
	  \begin{align*}
	  	\|R_q^t u\|^2_Q\le \widetilde{Q}_q^t(R_q^t u, R_q^t u)=\langle u, R_q^t u\rangle\le \|u\|\|R_q^t u\|\le \|u\|^2.
	  \end{align*}
	  It follows that
	 \begin{align*}
	 \|R_q^t u-R_qu\|^2\le \|u\|\left(t^{1/2}\|\nabla v\|+2\eps\right).
	 \end{align*}
	Letting $t\to 0^+$ and then $\eps\to 0^+$, we then conclude that $\|R^t_q u-R_q u\|\to 0$ as $t\to 0^+$.
	
	When $\Omega$ is pseudoconvex, the spectra of $\Box_q^t$ and $\Box_q$ are both contained in the interval $[q/eD^2,\ \infty)$, where as before $D$ is the diameter of $\Omega$ (\cite{Hormander65}; see also, e.g., \cite[Theorem~4.4.1]{ChenShaw99}). Thus in this case, it suffices to consider the convergence of the $\dbar$-Green operator $N^t_q$ (see, e.g., \cite[Theorem~VIII.9]{ReedSimon80}).  
	When $\Omega$ is strongly pseudoconvex with smooth boundary, from Kohn's subelliptic estimate we 
	know that there exists a constant $C>0$ such that 
	\[
	 \|N_q u\|_1 \le C\|u\| \quad \text{and}\quad \|N^t_q u\|_1\le C\|u\|
	 \]
	 for any $u\in L^2_{(0, q)}(\Omega)$ (\cite{Kohn63, FollandKohn72}). Thus $N_q u\in\Dom(Q^t_q)$ and we have
	 \begin{align*}
	 Q_q(N_q^t u,N_q^t u-N_qu) &=Q^t_q(N_q^t u,N_q^t u-N_q u)-t\langle\nabla N_q^t u,\nabla(N_q^t u-N_q u)\rangle \\
	 &=\langle  u, N^t_q u-N_qu\rangle- t\langle\nabla N_q^t u,\nabla(N_q^t u-N_q u)\rangle \\
	 &=Q_q(N_q u,N_q^t u-N_qu)- t\langle\nabla N_q^t u,\nabla(N_q^t u-N_q u)\rangle. 
	 \end{align*}
	 Therefore
	\begin{align*}
	\dfrac{q}{eD^2}\|N_q^t u-N_qu\|^2&\le Q_q(N_q^t u-N_qu,N_q^t u-N_qu)\\
	&=Q_q(N_q^t u,N_q^t u-N_qu)-Q_q(N_q u,N_q^t u-N_qu)\\
	&=-t\langle\nabla N_q^t u,\nabla(N_q^t u-N_q u)\rangle\le t\langle\nabla N_q^t u,\nabla N_q u\rangle \\ 
	&\le t\|\nabla N_q^t u\|\|\nabla N_q u\| \le Ct\|u\|^2.
	\end{align*}
	Hence $N^t_q$ converges to $N_q$ in norm as $t\to 0^+$.
\end{proof}
 
\begin{remark}\label{remark3} One cannot expect that $\Box^t_q$ converges to $\Box_q$ in norm resolvent sense if $\Omega$ is only assumed to be weakly pseudoconvex with smooth boundary. For example, if $\partial\Omega$ contains an $(n-1)$-dimensional complex analytic variety, then by Proposition~\ref{prop:kn}, $N^t_q$ is compact but $N_q$ is not (see \cite{FuStraube01}).  Hence $N^t_q$ cannot converge to $N_q$ in norm.  
\end{remark}

\section{Spectral stability under domain perturbation}\label{sec:deform} 

Our aim in this section is to establish a quantitative estimate for   $|\lambda^{t,q}_k(\omz_1)-\lambda^{t, q}_k(\omz_2)|$ when $\omz_1$ and $\omz_2$ are smooth bounded domains in $\C^n$ that are sufficiently close to each other.  The key is to construct a transition operator $T$ form $\Dom(Q^t_{q,\omz_1})$ to $\Dom(Q^t_{q,\omz_2})$ such that $|\|Tu\|_{\omz_2}-\|u\|_{\omz_1}|$ and $|Q^t_{q,\omz_2}(Tu,Tu)-Q^t_{q,\omz_1}(u,u)|$ is controlled
by the closeness between $\Omega_1$ and $\Omega_2$.  (Here we use $Q^t_{q,\omz}$ to denote the quadratic form associated with $\Box^t_q$ acting on $(0,q)$-forms on $\omz$. To economize the notation, we will sometimes drop the subscript $q$ when doing so causes no confusion.) Since $\Dom(Q^t_{\omz})=W_{(0,q)}^1(\omz)\cap\Dom(\dbarstar)$, the restriction of a form from $\Dom(Q_\omz^t)$ no longer belongs to $\Dom(Q^t_{U})$ where $U$ is a subdomain of $\omz$. Additionally, the extension of a form from $\Dom(Q_\omz^t)$ to zero outside $\omz$ does not make it belong to $\Dom(Q^t_{V})$ where $V$ is a larger domain containing $\omz$. As in \cite{FuZhu19}, we  overcome these difficulties by decomposing $u\in \Dom(Q^t_\Omega)$ into the tangential and normal components and treat them separately. The tangential component is dealt with as in the case of the Neumann Laplacian while the normal component is handled as in the case of the Dirichlet Laplacian.

We now elaborate on how to measure the closeness between domains.  Let $\Omega$ be a bounded domain in $\C^n$ with $C^m$-smooth boundary ($2\le m\le \infty$).
A real valued function $r\in C^m(\C^n)$ is said to be a defining function of $\Omega$ if
$r<0$ on $\Omega$, $r>0$ on $\C^n\setminus\overline{\Omega}$, and $|\nabla r|\not=0$ on $\partial\Omega$. The defining function is {\it normalized} if $|\nabla r|=1$ on $\partial\Omega$. 
Let $\rho$ be the signed distance function of $\Omega$ such that $\rho(z)=-\dist(z, \partial\Omega)$ when $z\in\Omega$ and $\rho(z)=\dist(z, \partial\Omega)$ when $z\in\C^n\setminus\Omega$. It is well known that there is a neighborhood $U$ of $\partial\Omega$ such that $\rho\in C^m(U)$ (see \cite{KrantzParks81}). It follows that for any normalized defining function $r(z)$ of $\Omega$, we have $r(z)=h(z)\rho(z)$ for some positive function $h\in C^{m-1}(U)$ such that $h=1$ on $\partial\Omega$. For $\delta>0$, let
\[
\Omega^{-}_\delta=\{z\in\C^n \mid r(z)<-\delta\}\quad\text{and}\quad\Omega^{+}_\delta=\{z\in\C^n \mid r(z)<\delta\}.
\]
Let $\Omega_j$ be a bounded domain in $\C^n$ with $C^m$-smooth boundary. Let $r_j$ be a normalized defining function for $\Omega_j$.  The closeness between $\Omega$ and $\Omega_j$ will be measured  by $\delta_j=\|r-r_j\|_{C^2}$,  the $C^2$-norm of $r-r_j$ over $\overline\Omega\cup\overline\Omega_j$. Note that for any $a>1$, 
\[
\Omega^-_{a^{-1}\delta_j}\subset \Omega_j\subset \Omega^+_{a\delta_j} 
\]
provided $\delta_j$ is sufficiently small. Furthermore, the signed distance function $\rho_j$ of $\Omega_j$ is $C^m$ on some neighborhood $U$ of $\partial\Omega$ (see \cite[Lemma~4.11]{Federer59} and \cite[Theorem~3]{KrantzParks81}).  

We first establish some auxiliary estimates.  It follows from the elliptic theory that $\|u\|_{s+2}\le C_t\|\square^t u\|_{s}$ on a smooth bounded pseudoconvex domain (see \cite[Proposition~3.5]{Straube10}). The following lemma is a quantitative version of this result. Throughout this section, we will assume that $0<t<1$, $1\le q\le n-1$  and $n$ is a positive integer.    
   \begin{lem}\label{ttt}
  	Let $\omz$  be a smooth bounded domain in $\C^n$. Let $s$ be a non-negative integer. Then there exists a constant $C>0$ independent of $t$ such that
  	\begin{align}\label{32}
  	\|u\|_{W^{s+2}}\le \dfrac{C}{t^{2^{s+1}}}\|\square_q^t u\|_{W^{s}}, 
  	\end{align}
for all $u\in \Dom(\square_q^t)$ with $\square_q^t u\in W^s_{(0,q)}(\omz)$. Moreover, if $\Omega$ is pseudoconvex, then the above estimate can be improved as follows:
 	\begin{align}\label{32b}
 		\|u\|_{W^{s+2}}\le \dfrac{C}{t^{3\cdot 2^{s-1}}}\|\square_q^t u\|_{W^{s}}.
 	\end{align}
  \end{lem}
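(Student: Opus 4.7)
The plan is to prove \eqref{32} and \eqref{32b} by induction on $s$. I will treat $\Box^t_q$ as a coercive elliptic boundary value problem whose Shapiro--Lopatinski boundary conditions degenerate in $t$ as $t\to 0^+$: for each fixed $t>0$ the operator enjoys full $W^{s+2}$-elliptic boundary regularity, and the entire $t$-dependence of the estimate is traced through the boundary conditions in the base case. The inductive step will then double the exponent of $1/t$ at each level, which is exactly the source of the stated bounds $t^{-2^{s+1}}$ (and $t^{-3\cdot 2^{s-1}}$ in the pseudoconvex setting).

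For the base case $s=0$, I proceed in two steps. First, from the identity
\[
t\|\nabla u\|^2\le Q^t_q(u,u)=\langle \Box^t_q u,u\rangle\le \|\Box^t_q u\|\,\|u\|
\]
together with the $L^2$-resolvent bound $\|u\|\le (C/t)\|\Box^t_q u\|$ from \eqref{eq:kn2}---replaced, when $\Omega$ is pseudoconvex, by the $t$-independent bound $\|u\|\le C\|\Box^t_q u\|$ coming from H\"ormander's estimate \eqref{eq:h2}---one obtains $\|u\|_{W^1}\le (C/t)\|\Box^t_q u\|$ in general and $\|u\|_{W^1}\le (C/t^{1/2})\|\Box^t_q u\|$ under pseudoconvexity. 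Second, I apply the standard interior and boundary $W^2$-elliptic estimate for $\Box^t_q$ with the boundary conditions
\[
(\dbar r)^*\lrcorner u=0,\qquad (\dbar r)^*\lrcorner\dbar u+t(\partial u/\partial\nu)^\tau=0\ \ \text{on}\ \partial\Omega,
\]
carefully tracking the $t$-dependence: solving the second condition for $(\partial u/\partial\nu)^\tau$ introduces a factor $1/t$, so the boundary estimate takes the form $\|u\|_{W^2}\le (C/t)(\|\Box^t_q u\|+\|u\|_{W^1})$. Combining with the $W^1$-bound above produces the two base-case estimates $\|u\|_{W^2}\le (C/t^2)\|\Box^t_q u\|$ and $\|u\|_{W^2}\le (C/t^{3/2})\|\Box^t_q u\|$.

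For the inductive step, let $D$ be a first-order differential operator tangential to $\partial\Omega$. Then $Du$ satisfies the same boundary conditions modulo lower-order commutator terms, and
\[
\Box^t_q(Du)=D(\Box^t_q u)+[\Box^t_q,D]u,
\]
where the commutator has order $2$. Applying the inductive hypothesis to $Du$ and using it a second time to control $\|u\|_{W^{s+2}}$ gives
\[
\|Du\|_{W^{s+2}}\le \frac{C}{t^{a_s}}\bigl(\|\Box^t_q u\|_{W^{s+1}}+\|u\|_{W^{s+2}}\bigr)\le \frac{C}{t^{2a_s}}\|\Box^t_q u\|_{W^{s+1}},
\]
where $a_s$ denotes the exponent at step $s$. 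This controls tangential derivatives; pure normal derivatives of order $s+3$ are then recovered by solving the equation $\Box^t_q u=f$ for the top-order normal component in terms of already-controlled derivatives. Since the recursion $a_{s+1}=2a_s$ with $a_0=2$ yields $a_s=2^{s+1}$, and with $a_0=3/2$ yields $a_s=3\cdot 2^{s-1}$, both conclusions follow. The main obstacle is the precise tracking of the $t$-dependence in the base-case boundary elliptic estimate, where the Lopatinski covering degenerates as $t\to 0^+$; once this is in place, the doubling pattern in the higher-order bounds is forced by the commutator scheme.
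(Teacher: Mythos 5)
Your proposal follows essentially the same route as the paper: the base case combines the coercivity bound $t\|\nabla u\|^2\le Q^t_q(u,u)$ with the $L^2$ resolvent estimate \eqref{eq:kn2} (H\"ormander's $t$-independent bound in the pseudoconvex case) to get the $W^1$ estimate, and the induction doubles the exponent exactly as in the paper, by applying the hypothesis both to a tangential derivative of $u$ and to $u$ itself to absorb the order-two commutator, with the top normal derivative recovered from $\Box^t_q u=-(\tfrac14+t)\Delta u$. The one place you are thinner than the paper is the base-case estimate $\|u\|_{W^2}\le (C/t)\big(\|\Box^t_q u\|+\|u\|_{W^1}\big)$, which you justify only by the heuristic that solving the boundary condition for $(\partial u/\partial\nu)^\tau$ costs a factor $1/t$; the paper proves precisely this by applying difference quotients $D^h_j$ in a special boundary frame (which preserve $\Dom(\dbarstar)$, so one never needs $D^h_j u\in\Dom(\Box^t_q)$ at this stage) to the quadratic form, obtaining $\|\nabla_T u\|_{W^1}\le (C/t)\big(\|\Box^t_q u\|+\|u\|_{W^1}\big)$ and then reading off $\partial^2u/\partial\nu^2$ from the interior equation. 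Your asserted estimate is correct and leads to the same exponents $a_0=2$ (resp.\ $a_0=3/2$) and $a_{s+1}=2a_s$, but the difference-quotient computation on $Q^t_q$ is where the actual tracking of the $t$-dependence happens and should be supplied rather than delegated to a ``standard'' elliptic estimate whose $t$-dependence is the whole point.
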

  
  \begin{proof} The proof follows the same line of arguments as in the proof of Proposition~3.5 in~\cite{Straube10}. One just needs to keep track of the constants. We provide the details for  the proof of \eqref{32}. By Proposition~\ref{prop:kn}, we have
  	\begin{equation}\label{32c}
  	Ct\|u\|^2\le Q^t(u, u), \quad u\in\Dom(Q^t),
  	\end{equation}
  		where the constant $C>0$ is independent of $t$.
  Hence  
  	\begin{equation}\label{32d} 		
  	\|u\|_{W^{1}(\omz)}\le \dfrac{C}{\sqrt t}(Q^{t}(u,u))^{1/2}\le\dfrac{C}{\sqrt t}\|\square^t u\|^{1/2}\|u\|^{1/2},\quad u\in\Dom(\square^t).
\end{equation}
It follows from \eqref{eq:kn2} that $\|u\|\le (C/t) \|\Box^t u\|$. Therefore
  	\begin{equation}\label{33}
  	\begin{aligned}
  	\|u\|_{W^{1}}\le\dfrac{C}{t}\|\square^t u\|.
  	\end{aligned}
  	\end{equation}

It suffices to prove (\ref{32}) when $u$ is supported in a special boundary chart. The general case is obtained by a partition of unity argument. 
Let $(t_1, \ldots, t_{2n-1}, r)$ be a local special coordinate chart near a boundary point where $r$ is a defining function of $\Omega$ and  $(t_1,\cdots, t_{2n-1})$ are coordinates on the boundary. Denote by $D^h_j$, $1\le j\le 2n-1$, the difference quotient with respect to $t_j$, acting on forms coefficientwise in a special boundary frame associated to special boundary chart. Note that $D^h_j$ preserves $\Dom(\dbarstar)$. For any $(0,q)$-form $v\in\Dom(Q^t)$, we have
    \begin{align*}
    &|\langle\square^t D_j^h u,v\rangle|= |\langle\dbar D_j^h u, \dbar v\rangle+\langle\dbarstar D_j^h u, \dbarstar v\rangle+t\langle\nabla D_j^h u, \nabla v\rangle|\\
    &\qquad\le C\|u\|_{W^{1}}\|v\|_{W^{1}}+|\langle D_j^h\dbar  u, \dbar v\rangle+\langle D_j^h\dbarstar u, \dbarstar v\rangle+t\langle D_j^h\nabla u, \nabla v\rangle|\\
    &\qquad= C\|u\|_{W^{1}}\|v\|_{W^{1}}+ |\langle \dbar  u, D_j^{-h}\dbar v\rangle+\langle \dbarstar u, D_j^{-h}\dbarstar v\rangle+t\langle \nabla u, D_j^{-h}\nabla v\rangle|\\
    &\qquad\le C\|u\|_{W^{1}}\|v\|_{W^{1}}+|Q^t(u, D_j^{-h}v)|\le\dfrac{C}{t}\|\Box^t u\|\|v\|_{W^{1}}.
    \end{align*}
   Substituting  $v$ by $D_j^h u$ in the above estimate, we obtain
    \begin{align*}
    	\|D_j^h u\|^2_{W^{1}}\le \dfrac{C}{t}Q^t(D_j^h u,D_j^h u)=\frac{C}{t}\langle \Box^t D^h_j u, \Box^t D^h_j u\rangle\le \dfrac{C}{t^2}\|\square^t  u\|\|D_j^h u\|_{W^{1}}.
    \end{align*}
    Therefore
    \begin{equation}\label{34}
    \begin{aligned}
    \|\nabla_T u\|_{W^{1}}\le C\|D_j^h u\|_{W^{1}}\le\dfrac{C}{t^2}\|\square^t  u\|,
    \end{aligned}
    \end{equation}
    where $\nabla_T$ denotes the gradient with respect to the tangential coordinates $(t_1,\cdots, t_{2n-1})$.
    
    We now estimate the full Sobolev norm.  Note that  
    $$\square^t u=-(\dfrac{1}{4}+t)\Delta u$$ 
    when $u\in\Dom(\square^t)$. Writing $\Delta$ in terms of tangential and normal derivatives in the local coordinates, we obtain
    $$\left\|\dfrac{\partial^2u}{\partial\nu^2}\right\|\le C\left(\|\square^t u\|+\|u\|_{W^{1}}+\|\nabla_T u\|_{W^{1}}\right),$$
    where $\dfrac{\partial}{\partial\nu}$ is the normal derivative.
    Consequently,
    \begin{equation}\label{35}
    \begin{aligned}
    \|u\|_{W^{2}}\le C\left(\|\nabla_T u\|_{W^{1}}+\|\partial^2u/\partial\nu^2\|\right)\le\dfrac{C}{t^2}\|\square^t  u\|.
    \end{aligned}
    \end{equation}
    Thus \eqref{32} holds for $s=0$. We proceed with the inductive step. Assume that \eqref{32} holds for $s$. Then
    
    \begin{equation}
    \begin{aligned}
    \|D_j^hu\|_{W^{s+2}}&\le \dfrac{C}{t^{2^{s+1}}}\|\square^t D_j^hu\|_{W^{s}}\\
    & \le \dfrac{C}{t^{2^{s+1}}}\left(\|u\|_{s+2}+\| D_j^h\square^tu\|_{W^{s}}\right)\\
    &\le \dfrac{C}{t^{2^{s+2}}}\|\square^tu\|_{W^{s+1}}.
    \end{aligned}
    \end{equation}
    Thus 
    $$\|\nabla_Tu\|_{W^{s+2}}\le \dfrac{C}{t^{2^{s+2}}}\|\square^tu\|_{W^{s+1}}.$$
  By the same arguments proceeding (\ref{35}), we then establish \eqref{32} for $s+1$. 
  
  When $\Omega$ is pseudoconvex, from H\"ormander's $L^2$-estimate, we have $\|u\|\le C\|\Box^t u\|$. Using this instead of \eqref{eq:kn2}, we obtain
  \begin{equation}\label{35b}
  	\|u\|_{W^1}\le \frac{C}{\sqrt{t}}\|\Box^t u\|.
 \end{equation} 	
 Note that in this case, the constants in \eqref{35b} and \eqref{32c} depends only on the diameter of $\Omega$ and $q$. The rest of the proof follows from the same lines except with different exponents of $t$. This concludes the proof of Lemma~\ref{ttt}. \end{proof}
 
 \begin{lem}\label{infty}
    Let $\omz$  be a smooth bounded domain in $\C^n$.  Let $u$ be an eigenform of $\Box^t$ with associated eigenvalue $\lambda^t(\Omega)$. Let $l$ be a non-negative integer. Then there exists a constant $C>0$ independent of $t$ such that
 	\begin{align}\label{9a}
 	\|u\|_{C^l(\overline{\Omega})}\le \dfrac{C}{t^{2(2^{n+l+2}-1)/3}}(\lambda^{t}(\omz))^{[\frac{n+l}{2}]+1}\|u\|,
 	\end{align}	
where $[(n+l)/2]$ as before denotes the integer part of $(n+l)/2$. Furthermore, if $\Omega$ is pseudoconvex, then
 	\begin{align}\label{9}
 		\|u\|_{C^l(\overline{\Omega})}\le \dfrac{C}{t^{(2^{n+l+2}-1)/2}}(\lambda^{t}(\omz))^{[\frac{n+l}{2}]+1}\|u\|.
 	\end{align}	
 \end{lem}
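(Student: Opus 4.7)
\medskip

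\noindent\textbf{Proof proposal for Lemma~\ref{infty}.}  The plan is to iterate the Sobolev estimate of Lemma~\ref{ttt} applied to the eigenform equation $\Box^t_q u=\lambda^t(\Omega) u$, and then invoke the Sobolev embedding theorem to convert a high enough $W^{2m}$-estimate into a $C^l$-estimate. Since $u$ is an eigenform, for every non-negative integer $s$ we have $\|\Box^t_q u\|_{W^s}=\lambda^t(\Omega)\|u\|_{W^s}$, so Lemma~\ref{ttt} specializes to
\[
\|u\|_{W^{s+2}}\le \frac{C\lambda^t(\Omega)}{t^{2^{s+1}}}\,\|u\|_{W^s}
\]
in the general case, and to $\|u\|_{W^{s+2}}\le (C\lambda^t(\Omega)/t^{3\cdot 2^{s-1}})\|u\|_{W^s}$ in the pseudoconvex case. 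Regularity of $u$ in each $W^{s}$ is obtained by a standard elliptic bootstrap starting from $u\in L^2_{(0,q)}(\Omega)$.

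\medskip

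Iterating the first inequality $m$ times starting at $s=0$ yields
\[
\|u\|_{W^{2m}}\le \frac{C^m(\lambda^t(\Omega))^{m}}{t^{\,\sum_{k=0}^{m-1}2^{2k+1}}}\,\|u\|
=\frac{C^m(\lambda^t(\Omega))^{m}}{t^{\,2(4^{m}-1)/3}}\,\|u\|,
\]
where the geometric sum $\sum_{k=0}^{m-1} 2^{2k+1}=2(4^m-1)/3$ is the heart of the computation. The analogous iteration in the pseudoconvex case produces the exponent $\sum_{k=0}^{m-1} 3\cdot 2^{2k-1}=(4^m-1)/2$.

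\medskip

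To finish, I would choose $m=[(n+l)/2]+1$ so that $2m\ge n+l+1$; since $\Omega\subset\R^{2n}$, this is the sharpest Sobolev threshold guaranteeing $W^{2m}_{(0,q)}(\Omega)\hookrightarrow C^l_{(0,q)}(\overline\Omega)$ with a uniform constant depending only on $\Omega$, $l$ and $n$. With this choice, a case-check shows $4^{m}=2^{n+l+2}$ when $n+l$ is even and $4^{m}=2^{n+l+1}$ when $n+l$ is odd, so in both cases $4^{m}\le 2^{n+l+2}$ and therefore
\[
\frac{2(4^{m}-1)}{3}\le \frac{2(2^{n+l+2}-1)}{3},\qquad \frac{4^{m}-1}{2}\le \frac{2^{n+l+2}-1}{2}.
\]
Combining the iterated estimate with the Sobolev embedding yields \eqref{9a} and, in the pseudoconvex case, \eqref{9}.

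\medskip

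The routine but bookkeeping-heavy step is keeping track of the $t$-exponent through the iteration; no real obstacle arises because Lemma~\ref{ttt} already isolated the $t$-dependence sharply. The only mild subtlety is making sure the chosen $m$ simultaneously controls the $\lambda^t$-exponent (which must come out to $[(n+l)/2]+1$) and the $t$-exponent (which must be bounded by the stated expression), and this is precisely what the choice $m=[(n+l)/2]+1$ accomplishes.
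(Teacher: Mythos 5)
Your proposal is correct and follows essentially the same route as the paper: iterate Lemma~\ref{ttt} on the eigenform equation to get $\|u\|_{W^{2m}}\le C t^{-2(4^m-1)/3}(\lambda^t(\Omega))^m\|u\|$ (resp. $t^{-(4^m-1)/2}$ in the pseudoconvex case), then apply Sobolev embedding with $m=[(n+l)/2]+1$. Your bookkeeping of the geometric sums and of the parity check on $n+l$ matches the exponents in \eqref{9a} and \eqref{9}, so there is nothing to add.
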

 \begin{proof} This is a direct consequence of Lemma~\ref{ttt} and the Sobolev embedding theorem. We provide only the proof for \eqref{9}. 
    
    For $u\in \Dom(\square^t)$, from (\ref{32}) with $s=0$, we have 
    $$\|u\|_{W^2}\le \dfrac{C}{t^{3/2}}\|\square^t u\|=\dfrac{C}{t^{3/2}}\lambda^{t}(\omz)\|u\|.$$
    Thus $u\in W^2_{(0,q)}(\omz)$ and $\square^t u=\lambda^t(\omz) u\in W^2_{(0,q)}(\omz)$. From \eqref{32} with $s=2$, we obtain
    $$\|u\|_{W^4}\le \dfrac{C}{t^6}\|\square^t u\|_{W^2}\le \dfrac{C}{t^{15/2}}(\lambda^{t}(\omz))^{2}\|u\|.
    $$
    Repeating this process, we obtain  $u\in W^{2m}_{(0,q)}(\omz)$ and 
    $$\|u\|_{W^{2m}(\omz)}\le \dfrac{C}{t^{(2^{2m}-1)/2}}(\lambda^{t}(\omz))^{m}\|u\|,\quad m\in\N.$$
    The desired inequality (\ref{9}) is then an immediate consequence of Sobolev embedding theorem.
 \end{proof}
 
Let $\Omega$ be a pseudoconvex domain in $\C^n$ with $C^m$-smooth boundary ($m\ge 2$). Let $r(z)$ be a normalized defining function of $\Omega$. Then $|\nabla r(z)|=1$ on $\partial\Omega$. Let $z'\in\partial\omz$ and let $U'\subset U$ be a tubular neighborhood of $z'$ such that $|\nabla r(z)-\nabla r(z')|<1/2$ when $z\in U'$ and 
$$
\Omega^\pm_\delta=\{z\in\C^n \mid r(z)<\pm\delta\}.
$$ 
Shrinking $U'$ if necessary, then for sufficiently small $\delta>0$, we have $z-2\delta \vv n(z')\in \Omega$ for all $z\in U'\cap \Omega^+_\delta$ and $z+2\delta\vv n(z')\not\in\Omega$ for all $z\in U'\setminus\Omega^-_\delta$. Furthermore, 
$$\dist(z- 2\delta\vv n(z'),\partial\omz)\ge\dist(z - 2\delta\vv n(z),\partial\omz)-2\delta|\vv n(z)-\vv n(z')|>2\delta-\delta=\delta,$$
for all $z\in U'\cap\Omega^+_\delta$. 
We choose a finite covering $\{V^l\}_{l=0}^{m}$ of $\ol{\omz}$ such that $V^0\subset\subset\omz$ and each $V^l$, $1\le l\le m,$ is a tubular neighborhood about some $z^l\in\partial\omz$ constructed as above. Write $\vv n^l=\vv n(z^l)$. We then have
$$
\bigcup_{l=1}^{m}\left\lbrace z-2\delta\vv n^l\,|\,z\in V^l\cap\omz\right\rbrace \bigcup V^0 \subset   \omz^-_\delta
$$
and 
$$
\bigcup_{l=1}^{m}\left\lbrace z+2\delta\vv n^l\,|\,z\in V^l\cap\omz\right\rbrace\bigcup V^0 \supset   \omz^+_\delta.
$$

We now construct a frame for $(0, 1)$-forms on $V^l$. Since $|\nabla r|>1/2$ on $V^l$. Shrinking $V^l$ if necessary, we may assume without loss of generality that $\partial r/\partial z_1\not=0$ on $V^l$. Let 
\[
\overline\omega^n_l=\bar\partial r \quad\text{and}\quad \overline\omega^k_l=d\bar z_k-\big(4\partial r/\partial z_k\big)\bar\partial r, \ \  1\le k\le n-1.
\]
Note that since $|\nabla r|=1$ on $\partial\Omega$.  Hence $\overline\omega^k_l$, $1\le k\le n-1$, is pointwise orthogonal to $\overline\omega^n_l$ on $\partial\Omega$ and satisfies the $\dbar$-Neumann boundary condition on $V^l\cap\partial\Omega$. Furthermore, the determinant of the coefficients of the $(0, 1)$-forms $\overline{\omega}^k_l$, $1\le k\le n$, is non-zero on $V^l$. Thus $\{\overline{\omega}^1_l, \ldots,  \overline{\omega}^n_l\}$ is indeed a frame for the $(0, 1)$-forms on $V^l$.  Let $\Omega_j$ be a bounded domain with $C^m$-boundary with a normalized defining function $r_j$ such that $\|r_j-r\|_{C^2}$ is sufficiently small. We then construct a frame $\{ \ov{\omega}^k_{j, l} ; \ 1\le k\le n\}$ as above but with $r$ replaced by $r_j$. Thus we have
\begin{equation}\label{eq:f1} 
\|\ov{\omega}^k_l -\ov{\omega}^k_{j, l}\|_{C^1(V^l\cap(\Omega_j\cup\Omega))}\le C\|r-r_j\|_{C^2(\Omega_j\cup\Omega)}.
\end{equation}

Let $\{\psi^l\}_{l=0}^{m}$ be a partition of unity subordinated the covering $\{V^l, \ 0\le l\le m\}$ such that $\supp \psi^l\subset V^l$. Let $\ec \colon W^s(\Omega)\to W^s(\C^n)$ be a continuous extension operator. Recall that the norm of this operator depends only
on $n$, $s$, and the Lipschitz constant of $\Omega$ (\cite[Ch~VI.3, Theorem~5]{Stein70}). Let $d(z)=\dist(z,\partial\omz)$. Let $\chi_j(t)$ be a smooth function such that $\chi_j(t)=0$ if $t>2\delta_j$, $\chi_j(t)=2\delta_j$ if $t<\delta_j$, and $0\le \chi'_j(t)\le 2$. 

We are now in position to define the transition operator.  Let $u^t\in C^\infty_{(0,q)}(\overline\omz)\cap \Dom(Q^t_\omz)$.  Using the partition of unity, we write 
\begin{equation}
	\begin{aligned}
		u^t=\psi^0 u^t+\sum_{l=1}^{m}\mathop{{\sum}'}\limits_{|J|=q}\psi^l u^t_J \bar\omega_{l}^J,
	\end{aligned}
\end{equation}
where $\{\bar\omega_{l}^1,\cdots,\bar\omega_{l}^n\}$ is the frame for $(0, 1)$-forms on $V^l$ ($1\le l\le m$) constructed as above. Since $u^t\in\Dom(Q^t_q)$, we have $u^t_J=0$ on $\partial\Omega$ when $n\in J$. We extend such $u^t_J$'s to be zero outsite of $\Omega$. Define $T_j:C^\infty_{(0, q)}(\ov{\Omega})\cap\Dom(Q^t_\omz)\to C^1_{(0, q)}(\ov{\Omega})\cap\Dom(Q^t_{\omz_j})$ by
\begin{equation}
	\begin{aligned}
		T_ju^t=\psi^0u^t+\sum_{l=1}^{m}\Big(\sumprime\limits_{J,n\notin J}\ec[\psi^lu^t_J]\bar\omega_{j,l}^J+\sumprime\limits_{J,n\in J} \psi^l(z) u^t_J(z+\chi(d(z))\vv n^l)\bar\omega_{j,l}^J\Big).
	\end{aligned}
\end{equation}
where $\{\bar\omega_{j,l}^1,\cdots,\bar\omega_{j,l}^n\}$ is the local frame of $(0, 1)$-forms constructed as above on $V^l$. Notice that $T_j u^t\in\Dom(Q^t_{\Omega_j})$ because the coefficients of $\bar\omega^J_{j, l}$ in the above expression is $0$ near $\partial\Omega_j$ if $n\in J$.
 
 \begin{thm}\label{thm5}
 	Let $\omz$ and $\omz_j$ be smooth bounded domains in $\C^n$ with normalized defining functions $r$ and $r_j$ respectively. Let $1\le q\le n-1$ and $k\in\N$. Then there exist constants $\delta$ and $C_k>0$ independent of $t$ and $j$ such that  
 	\begin{align}\label{3t1}
	\lambda^{t,q}_k(\omz_j)\le\lambda^{t,q}_k(\omz) + \dfrac{C_k\delta_j}{t^{4(2^{n+3}-1)/3}},
	\end{align}
	provided $\delta_j=\|r-r_j\|_{C^2}<\delta$. Furthermore, if $\Omega$ is pseudoconvex, then
	\begin{align}\label{3t}
		\lambda^{t,q}_k(\omz_j)\le\lambda^{t,q}_k(\omz) + \dfrac{C_k\delta_j}{t^{2^{n+3}-1}}.
	\end{align}
 \end{thm}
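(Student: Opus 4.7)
The plan is to apply Lemma~\ref{first}, via Remark~\ref{remark2}, with the transition operator $T_j$ constructed above, on the $k$-dimensional space spanned by the first $k$ eigenforms of $\Box^t_q$ on $\omz$. We focus on the pseudoconvex estimate \eqref{3t}; the general estimate \eqref{3t1} follows identically, using the non-pseudoconvex half of Lemma~\ref{infty} in place of the pseudoconvex half, which replaces the exponent $(2^{n+3}-1)/2$ by $2(2^{n+3}-1)/3$ at every occurrence and produces the exponent $4(2^{n+3}-1)/3$ in the final estimate.

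Let $u^t_1,\dots,u^t_k$ be $L^2$-orthonormal eigenforms of $\Box^t_q$ on $\omz$ with eigenvalues $\lambda^{t,q}_1(\omz),\dots,\lambda^{t,q}_k(\omz)$. Since $\omz$ has smooth boundary and $\Box^t_q$ is elliptic, each $u^t_i$ is smooth up to $\partial\omz$, and Lemma~\ref{infty} with $l=1$ yields
\[
\|u^t_i\|_{C^1(\ov\omz)}\le \dfrac{C}{t^{(2^{n+3}-1)/2}}(\lambda^{t,q}_k(\omz))^{[(n+1)/2]+1},\qquad 1\le i\le k.
\]
Because $\lambda^{t,q}_k(\omz)$ is bounded above for $t<1$ in terms of $\lambda^D_{k'}(\omz)$ by \eqref{eq:kn1b}, this is of the form $C_k/t^{(2^{n+3}-1)/2}$ with $C_k$ independent of $t$. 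By Cauchy--Schwarz in the coefficients, for every $u\in L_k:=\Span\{u^t_1,\dots,u^t_k\}$,
\[
\|u\|^2_{C^0(\ov\omz)}\le \dfrac{C_k}{t^{2^{n+2}-1}}\|u\|^2,\qquad \|u\|^2_{C^1(\ov\omz)}\le \dfrac{C_k}{t^{2^{n+3}-1}}\|u\|^2.
\]

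Next, for $u\in L_k$, decompose $T_j u-u$ region by region. On $\omz\cap\omz_j$ the difference comes from (i) replacing the frame $\ov\omega^J_l$ by $\ov\omega^J_{j,l}$, which is $O(\delta_j)$ in $C^1$ by \eqref{eq:f1}, and (ii) for multi-indices with $n\in J$, the normal translation $u^t_J(z+\chi(d(z))\vv n^l)-u^t_J(z)$, whose $C^0$ size is $O(\delta_j\|u\|_{C^1})$ since $\chi(d(z))\le 2\delta_j$. On the symmetric difference $\omz\triangle\omz_j$, whose Lebesgue measure is $O(\delta_j)$ because $\|r-r_j\|_{C^2}=\delta_j$, the form $T_j u$ is pointwise of size $O(\|u\|_{C^0})$, with tangential components controlled by Stein's extension $\ec$ and normal components vanishing there by the shift. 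Since $Q^t_q$ involves only first-order differential operators acting coefficient-wise on $u$, combining the contributions gives
\[
\big|\,\|T_ju\|^2_{\omz_j}-\|u\|^2_{\omz}\,\big|\le C\delta_j\|u\|^2_{C^0(\ov\omz)}\le \dfrac{C_k\delta_j}{t^{2^{n+2}-1}}\|u\|^2,
\]
\[
\big|\,Q^t_{\omz_j}(T_ju,T_ju)-Q^t_\omz(u,u)\,\big|\le C\delta_j\|u\|^2_{C^1(\ov\omz)}\le \dfrac{C_k\delta_j}{t^{2^{n+3}-1}}\|u\|^2.
\]

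Finally, set
\[
\alpha_k=\dfrac{C_k\delta_j}{t^{2^{n+2}-1}},\qquad \beta_k=\dfrac{C_k\delta_j}{t^{2^{n+3}-1}},
\]
and choose $\delta>0$ small enough that $k\alpha_k<1/2$ whenever $\delta_j<\delta$. Then Lemma~\ref{first} via Remark~\ref{remark2}, applied with $S_1=\Box^t_q$ on $\omz$, $S_2=\Box^t_q$ on $\omz_j$, and transition operator $T_j$ on $L_k$, gives
\[
\lambda^{t,q}_k(\omz_j)\le \lambda^{t,q}_k(\omz)+2k\big(\alpha_k\,\lambda^{t,q}_k(\omz)+\beta_k\big)\le \lambda^{t,q}_k(\omz)+\dfrac{C_k\delta_j}{t^{2^{n+3}-1}},
\]
which is \eqref{3t}. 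The principal difficulty is the bookkeeping of the $t$-dependence: the dominant contribution to $\beta_k$ arises from the coercive term $t\|\nabla u\|^2$ in $Q^t_q$, which after controlling $\|\nabla u\|$ by the $C^1$-norm of an eigenform through Lemma~\ref{infty} produces exactly the exponent $2^{n+3}-1$ in the pseudoconvex case.
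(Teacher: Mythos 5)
Your proposal is correct and follows essentially the same route as the paper: the same transition operator $T_j$ applied to the span of the first $k$ eigenforms of $\Box^t_q$ on $\Omega$, with the $t$-dependence tracked through Lemma~\ref{infty} (squared $C^0$- and $C^1$-bounds yielding the exponents $2^{n+2}-1$ and $2^{n+3}-1$, respectively $4(2^{n+2}-1)/3$ and $4(2^{n+3}-1)/3$ in the general case) and the conclusion drawn from Lemma~\ref{first}. One small inaccuracy in your closing remark: the dominant contribution to $\beta_k$ comes from the $\dbar$- and $\dbarstar$-terms of $Q^t_q$ over the symmetric difference and from the change of frame, not from the coercive term $t\|\nabla u\|^2$, which carries an extra factor of $t$ and is therefore of lower order; this does not affect the final exponent.
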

 \begin{proof} We provide only the proof for \eqref{3t}. The proof of \eqref{3t1} follows exactly the same lines. Let $L_{k}=\{u^t_{ 1},\cdots,u^t_{ k}\}$, where $\square^t_\omz u^t_{ h}=\lambda^t_k(\omz) u^t_{ h}$ and $\langle u^t_{ h},u^t_{ l}\rangle_{\omz}=\delta_{hl}$ for $1\leq h,l\leq k$. From elliptic theory (see Lemma~\ref{infty} above), we know that all the eigenforms $u^t_l$ are smooth up to the boundary.  We first estimate $\left|\left\langle T_ju^t_h,T_ju^t_l\right\rangle_{\omz_j}-\langle u^t_{ h},u^t_{ l}\rangle_{\omz}\right|$.  Note that $\Omega^-_{2\delta_j}\subset \Omega\cap\Omega_j$. We have
 
 \begin{equation}\label{s1}
 \begin{aligned}
 	\big|\langle T_ju^t_h,T_ju^t_l\rangle_{\omz_j}-\langle u^t_{ h},u^t_{ l}\rangle_{\omz}\big|
 	&\le\big|\langle T_j u^t_{h}, T_j u^t_{l }\rangle_{\omz^-_{2\delta_j}}-\langle u^t_{ h},u^t_{ l}\rangle_{\omz^-_{2\delta_j}}\big|\\
 	&+\big|\langle T_j u^t_{h}, T_j u^t_{l }\rangle_{\Omega_j\setminus\omz^-_{2\delta_j}}\big|+\big|\langle u^t_{ h},u^t_{ l}\rangle_{\Omega\setminus\omz^-_{2\delta_j}}\big|\\
 	&\le \big|\langle T_j u^t_{h}-u^t_h, T_j u^t_{l }\rangle_{\omz^-_{2\delta_j}}\big|+\big|\langle u^t_{ h}, T_ju^t_{l}-u^t_{ l}\rangle_{\omz^-_{2\delta_j}}\big| \\
 	&+\big|\langle T_j u^t_{h}, T_j u^t_{l }\rangle_{\Omega_j\setminus\omz^-_{2\delta_j}}\big|+\big|\langle u^t_{ h},u^t_{ l}\rangle_{\Omega\setminus\omz^-_{2\delta_j}}\big|.\\	
 \end{aligned}
 \end{equation}	
Since the corresponding coefficients of $u^t_h$ and  $T_ju^t_h$ are the same on $\Omega^-_{2\delta_j}$, we have
 \begin{equation}\label{s2}
 \|T_j u^t_h-u^t_h\|_{\Omega^-_{2\delta_j}}\le 	\sum_{l=1}^{m}\mathop{{\sum}'}\limits_{|J|=q}\big\|\psi^l u^t_{h, J} \big\|_{L^2} \|\omega_{j, l}^J-\omega_l^J\|_{C^0}\le C \delta_j.
 \end{equation}
 Note that $|\Omega_j\setminus\Omega^-_{2\delta_j}|\le C\delta_j$.  We have
 \begin{equation}\label{s3}
 	\begin{aligned}
 \big|\langle T_j u^t_{h}, T_j u^t_{l }\rangle_{\Omega_j\setminus\omz^-_{2\delta_j}}\big|&\le \|T_j u^t_h\|_{C^0} \|T_j u^t_l\|_{C^0} \big|\Omega_j\setminus \Omega^-_{2\delta_j}\big| \\
 &\le \dfrac{C_k\delta_j}{t^{2^{n+2}-1}}.
 \end{aligned}
 \end{equation}
 Here in the last inequality we have used (the proof of) Lemma~\ref{infty} and the Sobolev embedding theorem and the fact that the
 extension map  $\ec \colon W^s(\Omega)\to W^s(\C^n)$ is bounded. The other terms in \eqref{s1} are estimated similarly. We then obtain
\begin{equation}
	\left|\left\langle T_ju^t_h,T_ju^t_l\right\rangle_{\omz_j}-\langle u^t_{ h},u^t_{ l}\rangle_{\omz}\right|\le \dfrac{C_k\delta_j}{t^{2^{n+2}-1}}.
	\end{equation}
As a consequence,  $\{T_ju^t_{1},\cdots,T_ju^t_{k}\}$ is linearly independent when $\delta_0$ is sufficiently small. 

The expression $\big|Q^t_{\omz_j}(T_ju^t_h,T_ju^t_l)-Q^t_\omz(u^t_{ h},u^t_{ l})\big|$ can be estimated in the same way. The main difference is that in this case, we will need to estimate the first derivatives of $u^t_J$ and the second derivatives of $r$ and $r_j$. For example, 
we have
 \begin{align}
 	\|\dbar (T_j u^t_h-u^t_h)\|_{\Omega^-_{2\delta_j}}&\le 	\sum_{l=1}^{m}\mathop{{\sum}'}\limits_{|J|=q}\Big(\big\|\dbar (\psi^l u^t_{h, J}) \big\| \|\omega_{j, l}^J-\omega_l^J\|_{C^0}+\big\|\psi^l u^t_{h, J}\big\| \|\dbar(\omega_{j, l}^J-\omega_l^J)\|_{C^0}\Big)\notag\\
 	&\le C\|u^t_{h, J}\|_1\|r_j-r\|_{C^2}\le \dfrac{C_k \delta_j}{t^{(2^{n+3}-1)/2}}. \label{s4}
 \end{align}
Therefore we have 
\begin{equation}\label{s5}
	\left|Q^t_{\omz_j}(T_ju^t_h,T_ju^t_l)-Q^t_\omz(u^t_{ h},u^t_{ l})\right|\\ 
	\le \dfrac{C_k \delta_j}{t^{2^{n+3}-1}}.
\end{equation}
Applying Lemma \ref{first}, we then have
	\begin{align*}
	\lambda^{t,q}_k(\omz_j)\le\lambda^{t,q}_k(\omz) + \dfrac{C_k\delta_j}{t^{2^{n+3}-1}}.
	\end{align*}	
\end{proof}

We remark that in the above theorem, we need only $\Omega$ to be $C^{n+3}$-smooth.  To complete the proof of Theorem~\ref{thmt}, it remains to establish the estimate in the opposite direction:
\begin{equation}\label{s6}
	\lambda^{t,q}_k(\omz)\le\lambda^{t,q}_k(\omz_j) + \dfrac{C_k \delta_j}{t^{2^{n+3}-1}}.
\end{equation}	
The proof of \eqref{s6} is similar to that of \eqref{3t}. In this case, the transition operator is from $\Dom(Q^t_{\Omega_j})$ to $\Dom(Q^t_\Omega)$ and one need to make sure that the constants in the proofs are independent of $j$. Note that the constants in Lemma~\ref{ttt} and Lemma~\ref{infty} depend only on the diameter of $\omz$ and the $C^\infty$-norm of a defining function of $\omz$. (In fact, only derivatives up to $(n+3)^{\text{th}}$-order of the defining function have been used in the proofs.) The assumption that the $C^\infty$-norm of the defining function $r_j$ is uniformly bounded guarantees that the constants in the proofs of Lemma~\ref{ttt}, Lemma~\ref{infty}, and Theorem~\ref{thm5}  are indeed independent of $j$ when the roles of $\Omega_j$ and $\Omega$ are reversed (see the proof of Theorem~5.6 in \cite{FuZhu19} for related arguments). We leave the details to the interested reader. 

%The results in this paper can be used to study spectral stability of the $\dbar$-Neumann Laplacian. We exhibit such an application as follows. 

%\begin{cor}\label{thm3}
%	Let $\omz$ be a smooth bounded pseudoconvex domain with finite type boundary in $\C^n$ . Let $1\le q\le n-1$. Then there exist constants $1/2\ge\alpha>0$, $\delta_0>0$ and $C>0$ such that for any smooth pseudoconvex domain $\omz_j$,
%	\begin{align}\label{t3}
%		\lambda_k(\omz_j)\le\lambda_k(\omz) + Ck\tilde\delta_j^{1/(3\cdot2^{n/2+1}-2)}(\lambda_k(\omz))^{\frac{n+2}{\alpha}}
%	\end{align}
%	provided $\tilde\delta_j<\delta_0$.
%\end{cor}

%\begin{proof}
%	\begin{align*}
%		\lambda_k(\omz_2)-\lambda_k(\omz_1)&=\lambda_k(\omz_2)-\lambda^t_k(\omz_2)+\lambda^t_k(\omz_2)-\lambda^t_k(\omz_1)+\lambda^t_k(\omz_1)-\lambda_k(\omz_1)\\
%		&\le \dfrac{Ck\delta}{t^{3(2^{n/2+1}-1)}}(\lambda_k^{t}(\omz_1))^{n+2}+tC(\lambda_k(\omz_1))^{\frac{n+2}{\alpha}}
%	\end{align*}
%	Let $t=\delta^{(3\cdot2^{n/2+1}-2)^{-1}}$, thus
%	\begin{align*}
%		\lambda_k(\omz_2)-\lambda_k(\omz_1)\le Ck\delta^{1/(3\cdot2^{n/2+1}-2)}(\lambda_k(\omz_1))^{\frac{n+2}{\alpha}}.
%	\end{align*}
%\end{proof}
%\bibliography{biblio}
%\providecommand{\bysame}{\leavevmode\hbox to3em{\hrulefill}\thinspace}
%\providecommand{\MR}{\relax\ifhmode\unskip\space\fi MR }
% \MRhref is called by the amsart/book/proc definition of \MR.
%\providecommand{\MRhref}[2]{%
%  \href{http://www.ams.org/mathscinet-getitem?mr=#1}{#2}
%}
%\providecommand{\href}[2]{#2}

%\bibliographystyle{amsplain}

\enddocument

\end